\newcommand{\inlineitem}[1][]{%
\ifnum\enit@type=\tw@
    {\descriptionlabel{#1}}
  \hspace{\labelsep}%
\else
  \ifnum\enit@type=\z@
       \refstepcounter{\@listctr}\fi
    \quad\@itemlabel\hspace{\labelsep}%
\fi} \makeatother
\newcommand{\ga}{\alpha}
\newcommand{\gl}{\lambda}
\newcommand{\gm}{\mu}
\newcommand{\gn}{\nu}
\newcommand{\gr}{\rho}
\newcommand{\gf}{\phi}
\newcommand{\Gg}{\Gamma}
\newcommand{\Gl}{\Lambda}
\newcommand{\Gs}{\Sigma}
\newcommand{\Gom}{\Omega}
\newcommand{\bs}{\backslash}
\newcommand{\fs}{/}
\newcommand{\ti}{\tilde}
\newcommand{\mbb}{\mathbb}
\newcommand{\mcl}{\mathcal}
\newcommand{\ul}{\underline}
\newcommand{\ol}{\overline}
\newcommand{\us}{\underset}
\newcommand{\os}{\overset}
\newcommand{\lra}{\longrightarrow}
\newcommand{\N}{\mbb N}
\newcommand{\Z}{\mbb Z}
\newcommand{\ZZ}[1]{\Z/p^{#1}\Z}
\newcommand{\ra}{\rightarrow}
\newcommand{\es}{\emptyset}
\newcommand{\equ}[1]{%
\begin{equation*}
#1
\end{equation*}
}
\newcommand{\equa}[1]{%
\begin{equation*}
\begin{aligned}
#1
\end{aligned}
\end{equation*}
}
\newcommand{\equan}[2]{%
\begin{equation}
\label{Eq:#1}
\begin{aligned}
#2
\end{aligned}
\end{equation}
}
\DeclareMathOperator{\Det}{Det}
\DeclareMathOperator{\Hom}{Hom}
\DeclareMathOperator{\Diag}{Diag}
\theoremstyle{plain}
\newtheorem{theorem}{Theorem}[section]
\newtheorem{lemma}[theorem]{Lemma}
\newtheorem{conj}[theorem]{Conjecture}
\newtheorem{ques}[theorem]{Question}
\def\namedlabel#1#2{\begingroup
   \def\@currentlabel{#2}%
   \label{#1}\endgroup
}
\newtheorem*{thmOmega}{\bf{Theorem} $\bm{\Gom}$}
\theoremstyle{definition}
\newtheorem{defn}[theorem]{Definition}
\theoremstyle{remark}
\newtheorem{remark}[theorem]{Remark}
\newtheorem{example}[theorem]{Example}
\numberwithin{equation}{section}
\begin{document}
\title[A Combinatorial Identity Based on Abelian Groups]{A Combinatorial Identity for the \MakeLowercase{p}-Binomial Coefficient Based on Abelian Groups}
\author{C P Anil Kumar}
\address{School of Mathematics, Harish-Chandra Research Institute, HBNI, Chhatnag Road, Jhunsi, Prayagraj (Allahabad), 211 019,  India. \,\, email: {\tt akcp1728@gmail.com}}
\subjclass[2010]{Primary 05A15, Secondary 20K01}
\keywords{Lattices of Finite Index, Finite Abelian p-Groups, Smith Normal Form, Hermite Normal Form, $p$-Binomial Coefficient}
\thanks{This work is done while the author is a Post Doctoral Fellow at Harish-Chandra Research Institute, Prayagraj(Allahabad).}
\date{\sc \today}
\begin{abstract}
For non-negative integers $k\leq n$, we prove a combinatorial identity for the $p$-binomial coefficient $\binom{n}{k}_p$ based on abelian p-groups. A purely combinatorial proof of this identity is not known. While proving this identity, for $r\in \N\cup\{0\},s\in \N$ and $p$ a prime, we present a purely combinatorial formula for the number of subgroups of $\Z^s$ of finite index $p^r$ with quotient isomorphic to the finite abelian $p$-group of type $\ul{\gl}$\ , which is a partition of $r$ into at most $s$ parts. This purely combinatorial formula is similar to that for the enumeration of subgroups of a certain type in a finite abelian $p$-group obtained by Lynne Marie Butler. As consequences, this combinatorial formula gives rise to many enumeration formulae that involve polynomials in $p$ with non-negative integer coefficients.
\end{abstract}
\maketitle
\section{\bf{Introduction}}
Many results in combinatorics have its proofs relying on other branches of mathematics such as number theory. In this article we prove a combinatorial identity for the p-binomial coefficient based on the theory of abelian p-groups and number theory. The question of determining the number of abelian p-subgroups of different types is a natural one in group theory with a relatively rich and interesting history. Some of the authors who have worked on this topic are G.~Birkhoff~\cite{Birkhoff}, S.~Delsarte~\cite{MR0025463}, P.~Hall~\cite{Hall}, R.~P.~Stanley~\cite{MR2868112}, L.~M.~Butler~\cite{MR1223236}, D.~E.~Knuth~\cite{MR0270933}, I.~G.~Macdonald~\cite{MR3443860}. 

Let $\Gl$ denote the set of all finite sequences of the form 
\equan{Partition}{\ul{\gl}=(\gl_1\geq \gl_2\geq \gl_3\geq \cdots \geq \gl_k)}
where $\gl_i,\ 1\leq i\leq k$ are positive integers. Let $\mid \ul{\gl}\mid$ denote the sum $\us{i=1}{\os{k}{\sum}}\gl_i$. Here we say $\ul{\gl}$ is a partition of $\mid \ul{\gl}\mid$.
We allow the case $k=0$, resulting in an empty sequence or empty partition, which we denote by $\es$ and $\mid \es \mid =0$.
It is a well known fact, in the theory of abelian groups that, for any prime $p$, any finite abelian $p$-group is, up to an isomorphism, of the form  
\equan{AbGroupPartition}{\mcl{A}_{\ul{\gl}}=\ZZ {\gl_1} \oplus \ZZ {\gl_2} \oplus \cdots \oplus \ZZ {\gl_k}} for a unique partition $\ul{\gl}\in \Gl$ (refer to Chapter $3$, Theorem $3.3.2$ on Page $41$ of M.~Hall Jr.~\cite{MR0414669}). Here we say the abelian $p$-group $\mcl{A}_{\ul{\gl}}$ is of type $\ul{\gl}$. The empty sequence corresponds to the trivial abelian $p$-group. For $n,k\in \N \cup\{0\}$ let
\equ{\Gl_{n,k}=\{\ul{\gl}\in \Gl\big\vert \mid \ul{\gl}\mid =n, \ul{\gl} \text{ has exactly }k\text{ parts}\}.}  If $k>n$ then $\Gl_{n,k}=\es$ and if $k=0=n$ then $\Gl_{0,0}=\{\es\}$.
For $n\in \N\cup\{0\}$ the set of partitions of $n$ is given by \equ{\Gl_n=\us{k=0}{\os{n}{\bigcup}}\Gl_{n,k}.}
Now we introduce another collection $\Gl^0\supsetneq \Gl$ of sequences of the form 
\equan{PartitionwithZero}{\ul{\gl}=(\gl_1\geq \gl_2\geq \gl_3\geq \cdots \geq \gl_k)}
where $\gl_i,\ 1\leq i\leq k$ are non-negative integers. In this sequence $\ul{\gl}$ which is also called a partition, the parts $\gl_i,\ 1\leq i\leq k$ are allowed to be zero. In Equation~\ref{Eq:PartitionwithZero} the partition $\ul{\gl}$ has exactly $k$-parts. We allow the case $k=0$, resulting in the empty partition and also the only partition which has either only zero parts, or no parts at all, is the empty partition. Here again we can associate an abelian $p$-group to a partition $\ul{\gl}\in \Gl^0$ in a similar way as in Equation~\ref{Eq:AbGroupPartition}
given by 
\equ{\mcl{A}_{\ul{\gl}}=\ZZ {\gl_1} \oplus \ZZ {\gl_2} \oplus \cdots \oplus \ZZ {\gl_k}.}
Note that $\ZZ 0$ is the trivial group and for $\gl=\es,\mcl{A}_{\es}=\{0\}$, the trivial group. Here if two partitions $\ul{\gl},\ \ul{\gm}\in \Gl^0$ are equal after ignoring the zero parts then we have $\mcl{A}_{\ul{\gl}}\cong \mcl{A}_{\ul{\gm}}$.   Conversely, if $\mcl{A}_{\ul{\gl}}\cong \mcl{A}_{\ul{\gm}}$ for $\ul{\gl},\ \ul{\gm}\in \Gl^0$ then $\ul{\gl},\ \ul{\gm}$ become equal after ignoring the zero parts. For $n,\ k\in \N\cup\{0\}$, let 
\equ{\Gl^0_{n,k}=\{\ul{\gl}\in \Gl^0\big\vert \mid \ul{\gl}\mid =n, \ \ul{\gl} \text{ has exactly }k\text{ parts}\}.}
In particular for $k\in \N\cup\{0\}, \Gl^0_{0,k}=\{\es\}$.  
For $n,\ k\in \N\cup\{0\}$, we see that there is a natural bijection between the sets \equ{\Gl^0_{n,k}\text{ and }\us{j=0}{\os{k}{\bigcup}} \Gl_{n,j}}
where each partition in the set $\us{j=0}{\os{k}{\bigcup}} \Gl_{n,j}$ is extended by zeroes to a partition in $\Gl^0_{n,k}$ which has exactly $k$-parts.

\begin{defn}
Let $\ul{\gl}\in \Gl$ be a partition as given in Equation~\ref{Eq:Partition}. Then the conjugate partition $\ul{\gl}'\in \Gl$ of the partition $\ul{\gl}$ is the partition is given by
\equ{\ul{\gl}'=(\gl_1'\geq \gl_2'\geq \gl_3'\geq \cdots \geq \gl_l')} where 
\equ{\gl_i'=\emph{Card}\{j\mid \gl_j\geq i\},\ 1 \leq i\leq l=\gl_1.}
We observe that $\gl_1'=k$. For example the conjugate of the partition $\ul{\gl}=(5\geq 4\geq 4\geq 1)$ is $\ul{\gl}'=(4\geq 3\geq 3\geq 3\geq 1)$. The conjugate of the empty partition is itself.
\end{defn}
\begin{defn}
Let $\ul{\gl}=(\gl_1\geq \cdots \geq \gl_k),\ \ul{\gm}=(\gm_1\geq \cdots \ge \gm_l)$ be two partitions in $\Gl$. We say $\ul{\gm}\subseteq \ul{\gl}$ if $l\leq k$ and $\gm_i\leq \gl_i$ for $1\leq i\leq l$. 
\end{defn}
\begin{remark}
It is a well-known fact from the theory of abelian $p$-groups that if $\mcl{A}_{\ul{\gm}}$ is a subgroup of $\mcl{A}_{\ul{\gl}}$ then $\ul{\gm}\subseteq \ul{\gl}$  (refer to Chapter $3$, Theorem $3.3.3$ on Page $42$ of M.~Hall Jr.~\cite{MR0414669}). 
\end{remark}
\begin{defn}
Let $\ul{\gm}\subseteq \ul{\gl}\in \Gl$ be two partitions.  For a subgroup $\mcl{A}_{\ul{\gm}}\subseteq \mcl{A}_{\ul{\gl}}$, the co-type of $\mcl{A}_{\ul{\gm}}$ is defined to be the type of abelian p-group $\frac{\mcl{A}_{\ul{\gl}}}{\mcl{A}_{\ul{\gm}}}$.
\end{defn}
\begin{defn}
A sequence of real numbers $\{c_0,c_1,\cdots,c_m\}$ is said to be unimodal if there exists an index $0\leq r\leq m$ such that we have 
\equ{c_0\leq c_1\leq \cdots \leq c_r\geq c_{r+1} \geq \cdots \geq c_m.} A polynomial $p(x)=\us{i=0}{\os{m}{\sum}}c_ix^i\in \mbb{R}[x]$ is said to be unimodal if its sequence of coefficients $\{c_0,c_1,\cdots,c_m\}$ is unimodal.
\end{defn}
\begin{defn}
For $n,k\in \N,k\leq n$, the $p$-binomial coefficient is defined as 	
	\equ{\binom{n}{k}_p=\frac{(p^n-1)\cdots(p^{n-k+1}-1)}{(p^k-1)\cdots (p-1)}.}
For $n\in \N\cup\{0\},\ k=0,\ \binom{n}{k}_p=1$ and for $n,\ k\in \N,\ k>n,\ \binom{n}{k}_p=0$. 
\end{defn}
Now we state a remark about the p-binomial coefficients. 
\begin{remark}
	\label{remark:pBinomCoefficient}
	For a partition of $\ul{\gl}\in \Gl$, where $\ul{\gl}$ is as given in Equation~\ref{Eq:Partition}, we can think of $\ul{\gl}$ as a tableau, an arrangement of $\mid \ul{\gl}\mid$ square boxes with $\gl_1$ square boxes juxtaposed in the first row, $\gl_2$ square boxes juxaposed in the second row and so on, with all the rows being left aligned. We note that in this way, the partition $\ul{\gl}$ fits in a rectangular grid of square boxes of size $k\times \gl_1$.	
	Now we observe that for $1\leq l\leq n,\ l,\ n\in \N$ we have \equ{\binom{n}{l}_p=\binom{n-1}{l-1}_p+p^l\binom{n-1}{l}_p.}
	Therefore for $0\leq l\leq n,\ l,\ n\in \N$
	\equan{Tableau}{\binom{n}{l}_p=\us{\os{\ul{\gl}\in \Gl}{\ul{\gl} \text{ fits in }l\times (n-l)\text{ rectangle }}}{\sum}p^{\mid \ul{\gl} \mid}.}
	If $l=0$ or $n-l=0$ then Equation~\ref{Eq:Tableau} follows by taking $\ul{\gl}$ as the empty partition. If $0<l<n$ then it follows easily because if the first column of $\ul{\gl}$ has $l$ squares, then delete that column to obtain a partition which fits inside a $l\times (n-l-1)$ rectangle of squares; otherwise $\ul{\gl}$ itself fits inside a $(l-1)\times (n-l)$ rectangle of squares. As a consequence of Equation~\ref{Eq:Tableau} we have that, the p-binomial coefficient $\binom{n}{l}_p$ is a polynomial in $p$ with non-negative integer coefficients.
\end{remark}
In~\cite{MR1223236}, L.~M.~Butler gives a nice introduction to the lattice $L_{\ul{\gl}}(p)$ of subgroups of a finite abelian $p$-group of type $\ul{\gl}$  mentioning results in the last few decades in this subject and also the following most elusive conjecture.
\begin{conj}
For a prime $p$ and a partition $\ul{\gl}\in \Gl$	
the number $\ga_{\ul{\gl}}(k,p)$ of subgroups of order $p^k$ in a finite abelian $p$-group of type $\ul{\gl}$ is a unimodal polynomial in $p$ for each $0\leq k \leq \mid \ul{\gl}\mid$.	
\end{conj}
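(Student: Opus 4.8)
The plan is to reduce the conjecture to a statement about sums of symmetric unimodal polynomials and then to confront the residual obstruction by a hard-Lefschetz mechanism. First I would invoke the combinatorial subgroup-counting formula that is the main object of this article, the analogue of L.~M.~Butler's formula \cite{MR1223236}: the number of subgroups of $\mcl A_{\ul{\gl}}$ of a given type $\ul{\gm}\subseteq\ul{\gl}$ is a single power of $p$ times a product of $p$-binomial coefficients. Summing over all $\ul{\gm}$ with $\mid\ul{\gm}\mid=k$ then gives an identity of the shape
\[
\ga_{\ul{\gl}}(k,p)=\us{\os{\ul{\gm}\subseteq\ul{\gl}}{\mid\ul{\gm}\mid=k}}{\sum} p^{e(\ul{\gm})}\us{i\geq 1}{\prod}\binom{\gl_i'-\gm_{i+1}'}{\gm_i'-\gm_{i+1}'}_p,
\]
a finite sum of products of $p$-binomial coefficients weighted by monomials.

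By Remark~\ref{remark:pBinomCoefficient} each $\binom{n}{l}_p$ is a polynomial in $p$ with non-negative integer coefficients, and classically (Sylvester) it is both palindromic and unimodal, with centre of symmetry $\frac{l(n-l)}{2}$. A monomial $p^e$ is trivially palindromic and unimodal with centre $e$, and a product of finitely many palindromic unimodal polynomials is again palindromic and unimodal, its centre being the sum of the centres; hence every summand above is palindromic and unimodal. I would next record the easier half, namely that $\ga_{\ul{\gl}}(k,p)$ is itself palindromic. This follows either directly from the displayed formula or from the self-duality of the subgroup lattice $L_{\ul{\gl}}(p)$, which matches subgroups of order $p^k$ with subgroups of order $p^{\mid\ul{\gl}\mid-k}$ and forces the coefficient sequence of $\ga_{\ul{\gl}}(k,p)$ to be symmetric.

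The hard part is that unimodality is not additive: a sum of palindromic unimodal polynomials need not be unimodal unless the summands share a common centre, whereas the centres $c(\ul{\gm})=e(\ul{\gm})+\frac{1}{2}\us{i\geq1}{\sum}(\gm_i'-\gm_{i+1}')(\gl_i'-\gm_i')$ appearing above genuinely vary with $\ul{\gm}$. Already the case $\ul{\gl}=(1^n)$, where the sum collapses to the single Gaussian coefficient $\binom{n}{k}_p$, is nontrivial and admits no elementary combinatorial proof, which is the source of the remark in the abstract. To force the alignment I would pursue one of two routes. The representation-theoretic route builds a finite-dimensional graded vector space $V=\bigoplus_j V_j$ with $\dim_{\mbb C}V_j$ equal to the coefficient of $p^j$ in $\ga_{\ul{\gl}}(k,p)$, together with a degree-raising operator $E\colon V_j\to V_{j+1}$ that is injective for $j$ below the centre (a hard Lefschetz / $\mfr{sl}_2$ property); such injectivity yields unimodality degree by degree. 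Candidate spaces are cohomologies of Springer fibres or of nilpotent orbit closures attached to $\ul{\gl}$, for which Lefschetz operators are available. The combinatorial route, modelled on K.~O'Hara's proof for $\binom{n}{k}_p$, instead seeks an explicit decomposition of $\ga_{\ul{\gl}}(k,p)$ into non-negative building blocks each palindromic and unimodal about the \emph{common} centre of $\ga_{\ul{\gl}}(k,p)$; producing such a centred decomposition, rather than the merely self-centred one coming from Butler's formula, is the real content and the main obstacle.

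One might also try to induct using the recurrence $\binom{n}{l}_p=\binom{n-1}{l-1}_p+p^l\binom{n-1}{l}_p$ of Remark~\ref{remark:pBinomCoefficient} together with the recursive structure of $L_{\ul{\gl}}(p)$, but unimodality is not preserved under the sums such a recurrence produces, so I expect this to stall precisely at the common-centre issue; the Lefschetz route therefore seems the most promising way to complete the argument.
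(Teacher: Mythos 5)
The statement you are trying to prove is stated in the paper as a \emph{conjecture}, explicitly described there as ``open for more than twenty-five years''; the paper offers no proof of it, and neither do you. Your proposal correctly sets up the reduction via Butler's formula (this is Theorem~\ref{theorem:SubgroupEnumeration} of the paper, i.e.\ Butler's own result for subgroups of $\mcl{A}_{\ul{\gl}}$, not the paper's main theorem, which counts subgroups of $\Z^s$), and it correctly diagnoses the obstruction: each summand $\ga_{\ul{\gl}}(\ul{\gm},p)$ is, after extracting the maximal power of $p$, symmetric and unimodal by \cite{MR1110850}, but the centres of symmetry vary with $\ul{\gm}$, and unimodality is not closed under addition of polynomials with different centres. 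Everything after that diagnosis is a list of desiderata rather than an argument: you do not construct the graded vector space $V$ with a degree-raising map injective below the middle, you do not exhibit the O'Hara-style decomposition into blocks centred at the common centre of $\ga_{\ul{\gl}}(k,p)$, and you yourself label this missing construction ``the real content and the main obstacle.'' A proof that names its own missing step is a research programme, not a proof.

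Two smaller points. First, your claim that $\ga_{\ul{\gl}}(k,p)$ is palindromic ``from the self-duality of the subgroup lattice'' conflates two different symmetries: duality (Remark~\ref{remark:Duality} in the paper) gives $\ga_{\ul{\gl}}(k,p)=\ga_{\ul{\gl}}(\mid\ul{\gl}\mid-k,p)$, a symmetry in $k$, which by itself says nothing about the coefficient sequence of the polynomial in $p$; palindromicity in $p$ needs a separate argument. Second, even granting palindromicity, the Lefschetz route requires identifying the coefficients of $\ga_{\ul{\gl}}(k,p)$ with the Betti numbers of a specific variety carrying an ample class; no such variety is actually proposed, and for your named candidates (Springer fibres, nilpotent orbit closures) the relevant Poincar\'e polynomials are Green or Kostka--Foulkes polynomials rather than the subgroup counts $\ga_{\ul{\gl}}(k,p)$, so the identification itself would be a nontrivial theorem that the proposal does not supply.
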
  
This conjecture is open for more than twenty-five years. Also in~\cite{MR1223236}, L.~M.~Butler gives a combinatorial proof of the following theorem.

\begin{theorem}
\label{theorem:SubgroupEnumeration}
Let $p$ be a prime and $(\gm_1\geq \gm_2\geq \cdots\geq \gm_l)=\ul{\gm}\subseteq \ul{\gl}=(\gl_1\geq \gl_2\geq \cdots \geq \gl_k)$ be two partitions in $\Gl$.
The number $\ga_{\ul{\gl}}(\ul{\gm},p)$ of abelian $p$-subgroups of type $\ul{\gm}$ in a group of type $\ul{\gl}$ is given by 
\equ{\ga_{\ul{\gl}}(\ul{\gm},p)=\us{j\geq 1}{\prod}p^{(\gl_j'-\gm_j')\gm_{j+1}'}\binom{\gl_j'-\gm_{j+1}'}{\gm_j'-\gm_{j+1}'}_p}
where $\ul{\gl}'=(k=\gl_1'\geq \gl_2'\geq \cdots\geq \gl_{\gl_1}'),\ \ul{\gm}'=(l=\gm_1'\geq \gm_2'\geq \cdots\geq \gm_{\gm_1}')$ are partitions in $\Gl$ conjugate to $\ul{\gl}$ and $\ul{\gm}$ respectively with the convention that $\gl_i'=0$ for $i>\gl_1$ and $\gm_j'=0$ for $j>\gm_1$. 
\end{theorem}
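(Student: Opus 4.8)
The plan is to count the subgroups $H\cong\mcl{A}_{\ul{\gm}}$ of $G=\mcl{A}_{\ul{\gl}}$ by tracking how each such $H$ meets the socle series $0\subset G[p]\subset G[p^2]\subset\cdots$, where $G[p^j]=\{g\in G:p^jg=0\}$. The successive quotients $W_j:=G[p^j]/G[p^{j-1}]$ are $\mbb{F}_p$-vector spaces of dimension exactly $\gl_j'=\#\{i:\gl_i\geq j\}$, since each cyclic factor $\Z/p^{\gl_i}\Z$ with $\gl_i\geq j$ contributes a single new line. To each subgroup $H$ I would attach the canonical torsion subgroups $H[p^j]=H\cap G[p^j]$ and the images $U_j\subseteq W_j$ of $H[p^j]$ in the $j$-th layer. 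The first key point, which I would prove at the outset, is that $H[p^j]\cap G[p^{j-1}]=H[p^{j-1}]$, so that $\dim_{\mbb{F}_p}U_j=\dim H[p^j]/H[p^{j-1}]=\gm_j'$ for every $j$. It is essential to use the socle series rather than the radical series here: a cyclic summand of $H$ may be generated by an element lying deep in $pG$, so the images of $H$ in the layers $p^{j-1}G/p^jG$ would \emph{not} be forced by the type, whereas the torsion subgroups $H[p^j]$ are intrinsic to $H$ and hence recover $\ul{\gm}$ unambiguously.

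Next I would establish the compatibility constraint that turns the family $(U_j)_j$ into a flag. Multiplication by $p$ carries $G[p^j]$ into $G[p^{j-1}]$ and induces an \emph{injective} map $\bar p\colon W_{j+1}\hookrightarrow W_j$, under which $\bar p(U_{j+1})\subseteq U_j$. Thus the invariant of $H$ is a compatible flag: a sequence of subspaces $U_j\subseteq W_j$ with $\dim U_j=\gm_j'$ and $\bar p(U_{j+1})\subseteq U_j$. I would count these flags by choosing the $U_j$ in order of decreasing $j$: having fixed $U_{j+1}$, the subspace $\bar p(U_{j+1})$ of dimension $\gm_{j+1}'$ is prescribed inside $W_j$, and $U_j$ must be a subspace of dimension $\gm_j'$ containing it. The number of such $U_j$ equals the number of $(\gm_j'-\gm_{j+1}')$-dimensional subspaces of the quotient $W_j/\bar p(U_{j+1})$, which has dimension $\gl_j'-\gm_{j+1}'$; this is exactly the Gaussian binomial $\binom{\gl_j'-\gm_{j+1}'}{\gm_j'-\gm_{j+1}'}_p$. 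Taking the product over $j$ produces the binomial factors of the asserted formula.

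It remains to count, for a fixed compatible flag $(U_j)_j$, how many subgroups $H$ of type $\ul{\gm}$ give rise to it; this fibre count should equal $\prod_j p^{(\gl_j'-\gm_j')\gm_{j+1}'}$. I would reconstruct $H$ by lifting the layers one at a time and measuring the residual freedom. After the image $U_j$ is fixed, the lift of the deeper layers — recorded through the prescribed subspace $\bar p(U_{j+1})$ of dimension $\gm_{j+1}'$ — may be translated relative to a complement of $U_j$ in $W_j$, a space of dimension $\gl_j'-\gm_j'$; the admissible translations form a torsor under a group isomorphic to $\Hom(\mbb{F}_p^{\gm_{j+1}'},\mbb{F}_p^{\gl_j'-\gm_j'})$, of order $p^{(\gl_j'-\gm_j')\gm_{j+1}'}$. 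Multiplying the flag count by the fibre count yields $\ga_{\ul{\gl}}(\ul{\gm},p)=\us{j\geq1}{\prod}p^{(\gl_j'-\gm_j')\gm_{j+1}'}\binom{\gl_j'-\gm_{j+1}'}{\gm_j'-\gm_{j+1}'}_p$.

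The hard part will be the fibre count: proving that the number of subgroups inducing a given flag is exactly this power of $p$ and, crucially, is \emph{independent} of which flag was chosen, so that the total factorizes as a product. This requires an explicit perturbation argument over $\mbb{F}_p$ — choosing lifts of generators of each layer, classifying when two sets of lifts generate the same subgroup, and confirming that the resulting subgroup still has type exactly $\ul{\gm}$ (not a smaller type). I would verify the bookkeeping on small cases, for instance $\ul{\gl}=(2\geq1)$ with $\ul{\gm}=(2)$, where the formula predicts $p$ subgroups isomorphic to $\Z/p^2\Z$ and a direct count confirms this. Should the explicit fibre analysis prove unwieldy, the same identity can be recovered from the theory of Hall polynomials, where $\ga_{\ul{\gl}}(\ul{\gm},p)$ is a sum of the Hall polynomials $g^{\ul{\gl}}_{\ul{\gm}\,\ul{\gn}}(p)$ over all co-types $\ul{\gn}$; that route is less elementary, however, and obscures the combinatorial meaning of the two factors.
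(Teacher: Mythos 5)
The paper itself offers no proof of Theorem~\ref{theorem:SubgroupEnumeration}: it is quoted from L.~M.~Butler's memoir \cite{MR1223236} and used in this article only as an input to Theorem~\ref{theorem:ChainsofSubgroups}, so your argument has to stand on its own rather than be measured against an in-paper one.

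Your strategy --- record, for each subgroup $H$ of type $\ul{\gm}$ in $G=\mcl{A}_{\ul{\gl}}$, the flag of images $U_j\subseteq W_j=G[p^j]/G[p^{j-1}]$, count the compatible flags, then count the fibre over each flag --- is a legitimate and classical route (a socle-filtration ``linearization'' in the spirit of Delsarte and Birkhoff), and the parts you actually prove are correct: $\dim W_j=\gl_j'$; the condition $\dim U_j=\gm_j'$ for all $j$ is equivalent to $H$ having type exactly $\ul{\gm}$; multiplication by $p$ induces injections $W_{j+1}\hookrightarrow W_j$ carrying $U_{j+1}$ into $U_j$; and the number of compatible flags is $\us{j\geq 1}{\prod}\binom{\gl_j'-\gm_{j+1}'}{\gm_j'-\gm_{j+1}'}_p$. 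The genuine gap is the fibre count, which you name as ``the hard part'' but do not close, and it is where all the content of the theorem lives. Three things are asserted there without proof: (i) that every compatible flag is induced by at least one subgroup of type $\ul{\gm}$; (ii) that the subgroups inducing a fixed flag form a torsor under $\us{j\geq 1}{\prod}\Hom(\mbb{F}_p^{\gm_{j+1}'},\mbb{F}_p^{\gl_j'-\gm_j'})$ --- no action is constructed, and reconstructing $H$ from the $U_j$ genuinely requires choosing, layer by layer, lifts of prescribed order and then characterizing when two systems of lifts generate the same subgroup; and (iii) that the fibre size is independent of the flag, which is exactly what allows the total to factor as a product --- this uniformity cannot be obtained by appealing to homogeneity of the situation and needs its own argument. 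Until (i)--(iii) are carried out the proposal is a plan rather than a proof, and the fallback you mention (summing Hall polynomials $g^{\ul{\gl}}_{\ul{\gm}\,\ul{\gn}}(p)$ over cotypes $\ul{\gn}$) merely restates the identity to be proved rather than establishing it.
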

As a consequence of this result it immediately follows that $\ga_{\ul{\gl}}(\ul{\gm},p)$ and hence $\ga_{\ul{\gl}}(k,p)$ are polynomials in $p$ with non-negative integer coefficients. In fact in~\cite{MR1223236}, L.~M.~Butler also gives a combinatorial interpretation of these coefficients in Proposition 1.3.2 and Proposition 1.4.6. Actually she gives much more in these two propositions. The coefficients of $\ga_{\ul{\gl}}(\ul{\gm},p)$ are unimodal using R.~P.~Stanley~\cite{MR1110850}, Proposition 1 on Page 503 and Theorem 11 on Page 516. After factoring the maximum power of $p$ in $\ga_{\ul{\gl}}(\ul{\gm},p)$ the coefficients of the remaining polynomial factor is unimodal and symmetric.

In this article we prove a similar combinatorial formula (Theorem~\ref{theorem:SubgroupCounting}) for the number $\ga_{r,s}(\ul{\gl},p)$ of subgroups of $\Z^s$ having finite index $p^r$ whose quotients are all isomorphic to an abelian $p$-group of type $\ul{\gl}$ where  $r\in \N\cup\{0\},s\in \N$, $p$ is a prime and $\ul{\gl}$ is a partition of $r$ into at most $s$ parts, that is, $\ul{\gl}\in \us{j=0}{\os{s}{\bigcup}} \Gl_{r,j}$ which is in bijection with the set $\Gl^0_{r,s}$. This combinatorial formula in Theorem~\ref{theorem:SubgroupCounting} is the key ingredient to prove the combinatorial identity (Theorem~\ref{theorem:Identity}).
\subsection{\bf{The Combinatorial Identity}}
Now we state the combinatorial identity for the $p$-binomial coefficients. 
\begin{theorem}
\label{theorem:Identity}
Let $k\leq n$ be two non-negative integers and $P^{k+1}_{n+1}$ be the set of all partitions of $(n+1)$ whose first part is $(k+1)$. Then \equan{CombinatorialIdentity}{\binom{n}{k}_p=\us{\ul{\gl}\in P^{k+1}_{n+1}}{\sum}\bigg(\us{i\geq 1}{\prod}p^{(\gl_1-\gl_i)\gl_{i+1}}\binom{\gl_1-\gl_{i+1}}{\gl_1-\gl_i}_p\bigg)}
where $\ul{\gl}=(k+1=\gl_1\geq \gl_2\geq \cdots \geq \gl_{l})\in P^{k+1}_{n+1}$ with the convention that $\gl_i=0$ for $i>l$.
\end{theorem}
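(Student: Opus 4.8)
The plan is to read both sides of Equation~\ref{Eq:CombinatorialIdentity} as two different counts of a single family of objects, namely the sublattices of $\Z^{k+1}$ of index $p^{n-k}$. Throughout I would set $s = k+1$ and $r = n-k$, so that $s+r-1 = n$ and $s-1 = k$. The left-hand side will be obtained by counting all such sublattices at once, while the right-hand side will be obtained by first sorting them according to the isomorphism type of the quotient $\Z^s/H$ and then applying the paper's own formula from Theorem~\ref{theorem:SubgroupCounting}.

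First I would establish that $\binom{n}{k}_p$ is the total number of sublattices of $\Z^{s}$ of index $p^{r}$. By the symmetry $\binom{n}{k}_p = \binom{n}{n-k}_p$ we have $\binom{n}{k}_p = \binom{s+r-1}{r}_p$, so it suffices to show that $\Z^s$ has exactly $\binom{s+r-1}{r}_p$ sublattices of index $p^r$. Putting each sublattice in Hermite normal form, a sublattice with diagonal $(p^{e_1},\ldots,p^{e_s})$, where $e_1+\cdots+e_s = r$ and $e_j\geq 0$, is counted with multiplicity $\us{j}{\prod}p^{(j-1)e_j}$ (there are $j-1$ reduced entries above the $j$-th pivot). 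Summing, $\us{e_1+\cdots+e_s=r}{\sum}p^{\sum_j(j-1)e_j}$ is the complete homogeneous symmetric function $h_r$ evaluated at $(1,p,\ldots,p^{s-1})$, and the standard evaluation $h_r(1,p,\ldots,p^{s-1}) = \binom{s+r-1}{r}_p$ (equivalently, the local zeta factor $\us{i=0}{\os{s-1}{\prod}}(1-p^it)^{-1}$ of $\Z^s$) gives the claim.

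Next I would split this total according to quotient type. Each sublattice $H\leq \Z^s$ of index $p^r$ has $\Z^s/H\cong \mcl{A}_{\ul{\gm}}$ for a unique $\ul{\gm}$, a partition of $r$ into at most $s$ parts (that is, $\ul{\gm}\in \us{j=0}{\os{s}{\bigcup}}\Gl_{r,j}\cong \Gl^0_{r,s}$), and every such type is realised; hence the total equals $\us{\ul{\gm}}{\sum}\ga_{r,s}(\ul{\gm},p)$, and Theorem~\ref{theorem:SubgroupCounting} gives a closed form for each summand. The decisive combinatorial move is the reindexing of this sum by $P^{k+1}_{n+1}$: I would send a quotient type $\ul{\gm}$ (a partition of $r=n-k$ with at most $k+1$ parts) to $\ul{\gl} = (k+1,\gm_1',\gm_2',\ldots)\in P^{k+1}_{n+1}$, obtained by conjugating $\ul{\gm}$ and prepending the part $k+1$. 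Since $\ul{\gm}'$ has all parts at most $k+1$ and $\mid\ul{\gm}'\mid = n-k$, this $\ul{\gl}$ is a partition of $n+1$ with first part exactly $k+1$, and the assignment is a bijection (delete the first part, then conjugate).

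Finally I would check that, under this bijection, the formula of Theorem~\ref{theorem:SubgroupCounting} for $\ga_{r,s}(\ul{\gm},p)$ becomes exactly the product $\us{i\geq 1}{\prod}p^{(\gl_1-\gl_i)\gl_{i+1}}\binom{\gl_1-\gl_{i+1}}{\gl_1-\gl_i}_p$. Substituting $\gl_1 = k+1$ and $\gl_{i+1} = \gm_i'$, the $i=1$ factor is trivial, the $p$-powers match on the nose, and the binomial factors $\binom{(k+1)-\gm_{j+1}'}{(k+1)-\gm_j'}_p$ coincide with the type-counting binomials after applying the symmetry $\binom{a}{b}_p = \binom{a}{a-b}_p$. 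Summing over $\ul{\gl}\in P^{k+1}_{n+1}$ then reproduces the total $\binom{n}{k}_p$ from the second paragraph, proving the identity. I expect the main obstacle to be precisely this last alignment: keeping the index conventions consistent (parts versus conjugate parts, the shift induced by the prepended $k+1$, and the padding $\gl_i = 0$ for $i>l$) so that the two products agree factor by factor, together with verifying the degenerate cases $r=0$ (where $\ul{\gm} = \es$, $\ul{\gl} = (k+1)$, and both sides are $1$) and $k=0$.
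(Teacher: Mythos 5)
Your proposal is correct and follows essentially the same route as the paper: identify $\binom{n}{k}_p$ with the total number of index-$p^r$ sublattices of $\Z^{s}$ (with $s=k+1$, $r=n-k$) via Hermite normal form, decompose that count by quotient type using Theorem~\ref{theorem:SubgroupCounting}, and reindex by the bijection $\ul{\gm}\mapsto(\ul{\gm}+1)'$ onto $P^{k+1}_{n+1}$ (your ``conjugate and prepend $k+1$'' is the same map). The only cosmetic difference is that the paper packages the first step as Theorem~\ref{theorem:TotalSubgroupCounting} and, under its indexing, the product formulas match literally without invoking the symmetry $\binom{a}{b}_p=\binom{a}{a-b}_p$.
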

\begin{example}
~\\
\begin{itemize}	
\item For $n\in \N\cup\{0\}$ and $k=0$ we have $P^1_{n+1}=\{\ul{\gl}=(\gl_1=1\geq \gl_2=1\geq \cdots \geq \gl_{n+1}=1)\}$ and the identity is clear.
\item For $n=k\in \N\cup\{0\}$ we have $P^{k+1}_{n+1}=\{\ul{\gl}=(\gl_1=k+1)\}$.  $LHS=\binom{n}{k}_p=\binom{k}{k}_p=1=p^0\binom{k+1}{0}_p=RHS$.
\item For $k+1=n\in \N$ we have $P^{k+1}_{n+1}=\{\ul{\gl}=(\gl_1=k+1\geq \gl_2=1)\}$.  $LHS=\binom{n}{k}_p=\binom{k+1}{k}_p=p^0\binom{k}{0}_pp^0\binom{k+1}{k}_p=RHS$.
\item For $k\geq 1,\ k+2=n\in \N$. we have $P^{k+1}_{n+1}=\{\ul{\gl}=(\gl_1=k+1\geq \gl_2=2),\ \ul{\gm}=(\gm_1=k+1\geq \gm_2=1\geq \gm_3=1)\}$ has two partitions.
\equa{RHS&=p^0\binom{k-1}{0}_pp^0\binom{k+1}{k-1}_p+p^0\binom{k}{0}_pp^k\binom{k}{k}_pp^0\binom{k+1}{k}_p\\
&=\binom{k+1}{k-1}_p+p^k\binom{k+1}{k}_p=\binom{k+2}{k}_p=\binom{n}{k}_p=LHS.}
\item For $k=1,\ n=4$ we have $P^2_5=\{(2\geq2\geq 1),(2\geq 1\geq 1\geq 1)\}$.
$RHS=p^0\binom{0}{0}_pp^0\binom{1}{0}_pp^0\binom{2}{1}_p+p^0\binom{1}{0}_pp^1\binom{1}{1}_pp^1\binom{1}{1}_pp^0\binom{2}{1}_p=\binom{2}{1}_p+p^2\binom{2}{1}_p=\binom{4}{1}_p=LHS$.
\item For $k\geq 2,\ k+3=n\in \N$ we have $P^{k+1}_{n+1}=\{\ul{\gl}=(\gl_1=k+1\geq\gl_2=3),\ \ul{\gm}=(\gm_1=k+1\geq \gm_2=2\geq \gm_3=1),\ \ul{\gn}=(\gn_1=k+1\geq \gn_2=1\geq \gn_3=1\geq \gn_4=1)\}$ has three partitions.
\equa{RHS&=p^0\binom{k-2}{0}_pp^0\binom{k+1}{k-2}_p+p^0\binom{k-1}{0}_pp^{k-1}\binom{k}{k-1}_pp^0\binom{k+1}{k}_p\\
	&+p^0\binom{k}{0}_pp^k\binom{k}{k}_pp^k\binom{k}{k}_pp^0\binom{k+1}{k}_p\\	
&=\binom{k+1}{k-2}_p+p^{k-1}\binom{k}{k-1}_p\binom{k+1}{k}_p+p^{2k}\binom{k+1}{k}_p\\&=\binom{k+1}{k-2}_p+(p^{k-1}+p^k)\binom{k+1}{k-1}_p+p^{2k}\binom{k+1}{k}_p\\
&=\binom{k+2}{k-1}_p+p^k\binom{k+2}{k}_p=\binom{k+3}{k}_p=LHS.}
\end{itemize}
\end{example}
This identity follows as an immediate consequence of the following main theorem of the article and Theorem~\ref{theorem:TotalSubgroupCounting}. We give a proof of Theorem~\ref{theorem:Identity} later at the end of the article. A purely combinatorial proof of this identity is not known and the proof given in this article is based on counting certain set of finite abelian groups in two different ways.
\subsection{\bf{The Statements of the Important Theorems and an Open Question}}
~\\
We begin with the main result of the article.
\begin{thmOmega}
\namedlabel{theorem:SubgroupCounting}{$\Gom$}
Let $p$ be a prime and $s\in \N,r\in \N\cup\{0\}$. Let \equ{\ul{\gl}=(\gl_1\geq\gl_2\geq \ldots \geq \gl_s)\in \Gl^0_{r,s}\cong \us{j=0}{\os{s}{\bigcup}} \Gl_{r,j}} denote a partition of $r$ into exactly $s$ parts where the parts are allowed to be zero. 
Then the number $\ga_{r,s}(\ul{\gl},p)$ of subgroups of $\Z^s$ of finite index $p^r$ such that the quotients are isomorphic to the abelian $p$-group $\us{i=1}{\os{s}{\bigoplus}}\frac{\Z}{p^{\gl_i}\Z}$ is given by 
\equ{\us{i\geq 1}{\prod}p^{(\gl_1'-\gl_i')\gl_{i+1}'}\binom{\gl_1'-\gl_{i+1}'}{\gl_1'-\gl_i'}_p}
where $\ul{\gl}'=(s=\gl_1'\geq \gl_2'\geq \cdots \geq \gl_{\gl_1+t}')\in \Gl$ is the conjugate partition of $\ul{\gl}+t=(\gl_1+t\geq \gl_2+t\geq \cdots \geq \gl_s+t)\in \Gl$ for any choice of $t\in \N$ with the convention that $\gl_i'=0$ for $i>\gl_1+t$.
\end{thmOmega}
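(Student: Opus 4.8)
The plan is to reduce the enumeration over the infinite group $\Z^s$ to a subgroup count inside a single finite abelian $p$-group and then invoke Butler's formula, Theorem~\ref{theorem:SubgroupEnumeration}. First I would fix $N=\gl_1+t$ for an arbitrary $t\in\N$ and note that every subgroup $H\sbq\Z^s$ with $\Z^s/H\cong\mcl{A}_{\ul{\gl}}$ automatically contains $p^N\Z^s$, since the exponent of $\mcl{A}_{\ul{\gl}}$ is $p^{\gl_1}$, which divides $p^N$. Hence, applying the correspondence theorem to the projection $\gp\colon\Z^s\ra(\ZZ{N})^s$, these subgroups $H$ are in bijection with the subgroups $K=H/p^N\Z^s$ of the finite group $(\ZZ{N})^s=\mcl{A}_{(N^s)}$ (of type $(N\geq N\geq\cdots\geq N)$ with $s$ parts) for which $((\ZZ{N})^s)/K\cong\mcl{A}_{\ul{\gl}}$. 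This bijection preserves indices, so $\ga_{r,s}(\ul{\gl},p)$ equals the number of subgroups of $\mcl{A}_{(N^s)}$ of cotype $\ul{\gl}$; in particular this finite count is the same for every admissible $t$, which is exactly what legitimises the passage to a finite group.

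Next I would convert this cotype count into a type count. The lattice of subgroups of the finite abelian $p$-group $(\ZZ{N})^s$ is self-dual: fixing an isomorphism $\widehat{(\ZZ{N})^s}\cong(\ZZ{N})^s$ and sending $K$ to its annihilator $K^{\perp}$ gives an inclusion-reversing bijection of the subgroup lattice. Because $K^{\perp}\cong\widehat{G/K}$ and $G/K^{\perp}\cong\widehat{K}$ (for $G=(\ZZ{N})^s$), this bijection interchanges the type and the cotype of each subgroup. Consequently the number of subgroups of cotype $\ul{\gl}$ equals the number of subgroups of type $\ul{\gl}$, so that $\ga_{r,s}(\ul{\gl},p)=\ga_{(N^s)}(\ul{\gl},p)$.

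I would then apply Theorem~\ref{theorem:SubgroupEnumeration} with ambient type $(N^s)$ and subgroup type $\ul{\gl}$. Writing $c_j=\#\{i:\gl_i\geq j\}$ for the ordinary conjugate of $\ul{\gl}$, the conjugate of $(N^s)$ has its first $N$ parts all equal to $s$, and since $\gl_1\leq N$ every ambient-conjugate part appearing in Butler's product over the relevant range equals $s$. Butler's formula therefore collapses to a product of factors $p^{(s-c_j)c_{j+1}}\binom{s-c_{j+1}}{c_j-c_{j+1}}_p$. The remaining step is pure bookkeeping: writing $\gl_j'$ for the conjugate of $\ul{\gl}+t$, one checks that $\gl_j'=s$ for $1\leq j\leq t$ while $\gl_{t+m}'=c_m$ for $m\geq1$, so in the claimed product $\prod_{i\geq1}p^{(\gl_1'-\gl_i')\gl_{i+1}'}\binom{\gl_1'-\gl_{i+1}'}{\gl_1'-\gl_i'}_p$ the first $t$ factors are trivial and the rest reindex to the factors above after using $\binom{a}{b}_p=\binom{a}{a-b}_p$ with $a=s-c_{m+1}$ (note $(s-c_m)+(c_m-c_{m+1})=s-c_{m+1}$). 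This confirms both the stated formula and its manifest independence of $t$.

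I expect the \emph{main obstacle} to be making the first step fully rigorous: verifying that the lift bijection with the subgroups of $(\ZZ{N})^s$ is genuinely index- and cotype-preserving for every admissible $t$, so that the finite count is well defined and $t$-independent. Everything afterward rests on the classical self-duality of the subgroup lattice of a finite abelian $p$-group together with an elementary reindexing. A more computational alternative would be to count the Hermite normal forms of determinant $p^r$ organised by their Smith normal form, but reducing to Butler's theorem keeps the argument short and conceptual.
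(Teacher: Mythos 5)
Your argument is correct, but it takes a genuinely different route from the paper. You reduce the count to a single finite abelian $p$-group: every subgroup $H$ with $\Z^s/H\cong\mcl{A}_{\ul{\gl}}$ contains $p^N\Z^s$ for $N=\gl_1+t$, so the correspondence theorem identifies $\ga_{r,s}(\ul{\gl},p)$ with the number of subgroups of $(\ZZ{N})^s$ of co-type $\ul{\gl}$; duality (which the paper itself records in Remark~\ref{remark:Duality}) turns this into the number of subgroups of type $\ul{\gl}$, and Butler's Theorem~\ref{theorem:SubgroupEnumeration} applied to the ambient rectangle $(N^s)$ collapses --- because all the relevant conjugate parts of $(N^s)$ equal $s$ --- to exactly the stated product after your reindexing $\gl_j'=s$ for $j\leq t$, $\gl_{t+m}'=c_m$ for $m\geq 1$, and the symmetry $\binom{a}{b}_p=\binom{a}{a-b}_p$; your checks here are all correct, and the $t$-independence comes out for free. (Your worry about the first step is unfounded: the correspondence theorem gives a lattice isomorphism preserving quotients on the nose, so index and co-type are automatic.) The paper instead works directly in $\Z^s$: it parametrises the subgroups with fixed Smith normal form $\Diag(p^{\gl_1},\dots,p^{\gl_s})$ by the coset space of an explicit congruence-type subgroup $G_{\ul{\gl}}\subseteq SL_s(\Z)$, passes to $SL_s(\ZZ{\gl_1})$ via strong approximation, computes the cardinalities of both groups, and then needs a separate inductive identity (Lemma~\ref{lemma:PartConjPart}) to convert the resulting exponent of $p$ into the conjugate-partition form. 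Your route is shorter and explains conceptually why the answer has Butler's shape, but it makes Theorem~\ref{theorem:SubgroupCounting} a corollary of Theorem~\ref{theorem:SubgroupEnumeration}, whereas the paper's computation is logically independent of Butler's formula (which it only quotes and uses elsewhere, for Theorem~\ref{theorem:ChainsofSubgroups}); that independence, and the explicit group-theoretic description of $G_{\ul{\gl}}$, is part of what the paper's longer argument buys.
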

\begin{remark}
In main Theorem~\ref{theorem:SubgroupCounting}, if $\gl_s\neq 0$ then we can allow the choice $t=0$ also and $\ul{\gl}'$ can be taken to be the conjugate of $\ul{\gl}$, otherwise if $\gl_s=0$ then $t$ must be positive. 
\end{remark}
\begin{theorem}
\label{theorem:TotalSubgroupCounting}	
Let $p$ be a prime and $r\in \N\cup\{0\},s\in \N$. Then the number of subgroups of $\Z^s$ of finite index $p^r$ is 
\equ{\binom{r+s-1}{s-1}_p} 
\end{theorem}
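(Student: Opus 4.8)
The plan is to enumerate the finite-index subgroups of $\Z^s$ directly through their Hermite Normal Form and then to recognise the resulting weighted count as a $p$-binomial coefficient by invoking Equation~\ref{Eq:Tableau}.

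First I would recall the standard fact that every subgroup $H\leq \Z^s$ of finite index has a unique basis whose vectors, written as rows, assemble into an upper-triangular integer matrix $M=(m_{ij})$ with $m_{ii}\geq 1$ for all $i$ and $0\leq m_{ij}<m_{jj}$ for $i<j$; moreover $[\Z^s:H]=\Det(M)=\prod_{i=1}^{s}m_{ii}$. This furnishes a bijection between the subgroups of index $p^r$ and the Hermite Normal Form matrices of determinant $p^r$, so the problem becomes one of counting such matrices.

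Second, for determinant $p^r$ the diagonal entries are forced to be $m_{ii}=p^{a_i}$ with $a_i\in \N\cup\{0\}$ and $a_1+\cdots+a_s=r$. Once the diagonal is fixed, the only free entries are the $m_{ij}$ with $i<j$: in the $j$-th column there are $j-1$ of them, each ranging independently over $\{0,1,\ldots,p^{a_j}-1\}$. Hence the number of matrices with a prescribed diagonal is $\prod_{j=1}^{s}(p^{a_j})^{j-1}=p^{a_2+2a_3+\cdots+(s-1)a_s}$, and summing over all admissible diagonals yields
\begin{equation*}
(\text{number of subgroups of index } p^r)=\sum_{\substack{a_1+a_2+\cdots+a_s=r\\ a_i\geq 0}} p^{a_2+2a_3+\cdots+(s-1)a_s}.
\end{equation*}

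Third, and this is the conceptual crux, I would identify the right-hand side with $\binom{r+s-1}{s-1}_p$ by means of Equation~\ref{Eq:Tableau}. Putting $b_i=a_{i+1}$ for $1\leq i\leq s-1$, the constraints become $b_i\geq 0$ and $b_1+\cdots+b_{s-1}=r-a_1\leq r$, while the exponent becomes $\sum_{i=1}^{s-1}i\,b_i$. Associating to $(b_1,\ldots,b_{s-1})$ the partition $\ul{\gm}$ with parts $\gm_j=b_j+b_{j+1}+\cdots+b_{s-1}$ gives a weight-preserving bijection onto the partitions with at most $s-1$ parts and largest part $\gm_1=b_1+\cdots+b_{s-1}\leq r$, that is, onto the partitions fitting inside an $(s-1)\times r$ rectangle, since $|\ul{\gm}|=\sum_{j=1}^{s-1}\gm_j=\sum_{i=1}^{s-1}i\,b_i$. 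Equation~\ref{Eq:Tableau} with $n=r+s-1$ and $l=s-1$ (so that $n-l=r$) then identifies the sum with $\binom{r+s-1}{s-1}_p$, completing the proof. I expect the genuine work to lie entirely in this last identification; the HNF bijection and the column-by-column count are routine, and an alternative to the bijection would be the generating-function form of the $q$-binomial theorem, $\prod_{i=0}^{s-1}(1-p^i t)^{-1}=\sum_{r\geq 0}\binom{r+s-1}{s-1}_p t^r$, which the composition sum above reproduces term by term after factoring the geometric series $\sum_{a_i\geq 0}(p^{i-1}t)^{a_i}$.
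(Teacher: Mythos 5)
Your proposal is correct and follows essentially the same route as the paper: the paper also reduces to counting Hermite Normal Forms of determinant $p^r$ (phrased as orbits of the left $SL_s(\Z)$-action, and in the transposed lower-triangular convention), obtains the identical sum $\sum p^{a_2+2a_3+\cdots+(s-1)a_s}$ over compositions of $r$, and identifies it with $\binom{r+s-1}{s-1}_p$ via the same bijection with partitions fitting in an $(s-1)\times r$ rectangle using Equation~\ref{Eq:Tableau}.
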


As a consequence of Theorem~\ref{theorem:SubgroupCounting}, we conclude that $\ga_{r,s}(\ul{\gl},p)$ is a polynomial in $p$ with non-negative integer coefficients because of Remark~\ref{remark:pBinomCoefficient} and the coefficients of $\ga_{r,s}(\ul{\gl},p)$ are unimodal using R.~P.~Stanley~\cite{MR1110850}, Proposition 1 on Page 503 and Theorem 11 on Page 516. After factoring the maximum power of $p$ in $\ga_{r,s}(\ul{\gl},p)$ the coefficients of the remaining polynomial factor is unimodal and symmetric.
In fact we can conclude the following theorem which we will prove later in this article. 

\begin{theorem}
\label{theorem:ChainsofSubgroups}
Let $s,m$ be positive integers and $p$ a prime.
\begin{enumerate}
\item  For a given set $S=\{a_1<a_2<\cdots<a_m\}$ of $m$ non-negative integers let $\ga_s(S,p)$ be the number of chains of subgroups $A_m \subseteq A_{m-1}\subseteq \cdots \subseteq A_1\subseteq \Z^s$ such that the index $[\Z^s:A_i]=p^{a_i},1 \leq i\leq m$. Then $\ga_s(S,p)$ is a polynomial in $p$ with non-negative integer coefficients.
\item For a given finite sequence of $m$-partitions in $\Gl$ say $\ul{\gl}^{(1)}\subseteq \ul{\gl}^{(2)}\subseteq \cdots \subseteq \ul{\gl}^{(m)}$ with each partition having at most $s$ parts, let $\ga_s(\ul{\gl}^{(1)},\ul{\gl}^{(2)},\cdots,\ul{\gl}^{(m)},p)$ be the number of chains of finite index subgroups $A_m \subseteq A_{m-1}\subseteq \cdots \subseteq A_1\subseteq \Z^s$ such that $\frac{\Z^s}{A_i}$ is a finite abelian $p$-group of type $\ul{\gl}^{(i)},1\leq i\leq m$. Then $\ga_s(\ul{\gl}^{(1)},\ul{\gl}^{(2)},\cdots,\ul{\gl}^{(m)},p)$ is a polynomial in $p$ with non-negative integer coefficients with coefficients being unimodal. 
\end{enumerate}	
\end{theorem}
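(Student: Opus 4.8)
The plan is to reduce both statements to the enumeration results already in hand --- Theorem~\ref{theorem:TotalSubgroupCounting}, the main Theorem~\ref{theorem:SubgroupCounting}, and Butler's Theorem~\ref{theorem:SubgroupEnumeration} --- by writing each chain count as a \emph{product} of known counts and then invoking the positivity and unimodality properties recorded in Remark~\ref{remark:pBinomCoefficient} and in the discussion following Theorem~\ref{theorem:SubgroupCounting}.

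\textbf{Part (1).} The decisive observation is that every finite-index subgroup of $\Z^s$ is again free of rank $s$, hence isomorphic to $\Z^s$. Setting $a_0=0$ and building the chain from the top, I would choose $A_1\subseteq \Z^s$ of index $p^{a_1}$, then $A_2\subseteq A_1$ with $[A_1:A_2]=p^{a_2-a_1}$, and so on; at the $i$-th stage the ambient group $A_{i-1}\cong \Z^s$, so by Theorem~\ref{theorem:TotalSubgroupCounting} the number of admissible $A_i$ equals $\binom{(a_i-a_{i-1})+s-1}{s-1}_p$, independently of the earlier choices. The count therefore telescopes into
\equ{\ga_s(S,p)=\us{i=1}{\os{m}{\prod}}\binom{(a_i-a_{i-1})+s-1}{s-1}_p,}
and since each Gaussian binomial is a polynomial in $p$ with non-negative integer coefficients by Remark~\ref{remark:pBinomCoefficient}, so is the product. (Each factor is in fact symmetric and unimodal, so the product is unimodal as well, but only non-negativity is asserted here.)

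\textbf{Part (2).} Here the types, not merely the indices, are prescribed, so the telescoping is subtler. I would put $G=\Z^s/A_m$, a finite abelian $p$-group of type $\ul{\gl}^{(m)}$, and pass to the quotient flag $G=B_0\supseteq B_1\supseteq\cdots\supseteq B_m=0$ with $B_i=A_i/A_m$; the third isomorphism theorem gives $G/B_i\cong \Z^s/A_i$ of type $\ul{\gl}^{(i)}$. The number of choices of $A_m$ is $\ga_{r_m,s}(\ul{\gl}^{(m)},p)$ with $r_m=\mid\ul{\gl}^{(m)}\mid$ by Theorem~\ref{theorem:SubgroupCounting}, and for a fixed isomorphism class of $G$ the number of such flags depends only on $\ul{\gl}^{(m)}$. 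Counting the flag from the bottom, I first choose $B_{m-1}\subseteq G$ with $G/B_{m-1}$ of type $\ul{\gl}^{(m-1)}$, then $B_{m-2}/B_{m-1}\subseteq G/B_{m-1}$ (type $\ul{\gl}^{(m-1)}$) with quotient of type $\ul{\gl}^{(m-2)}$, and so on, each count depending only on the type of the current quotient and each realizable because $\ul{\gl}^{(i)}\subseteq\ul{\gl}^{(i+1)}$. Invoking Pontryagin self-duality of finite abelian $p$-groups --- which, via $K\mapsto K^\perp$, identifies the number of subgroups of co-type $\ul{\gm}$ in a group of type $\ul{\gl}$ with the number of subgroups of type $\ul{\gm}$, namely $\ga_{\ul{\gl}}(\ul{\gm},p)$ of Theorem~\ref{theorem:SubgroupEnumeration} --- I obtain
\equ{\ga_s(\ul{\gl}^{(1)},\ldots,\ul{\gl}^{(m)},p)=\ga_{r_m,s}(\ul{\gl}^{(m)},p)\us{i=1}{\os{m-1}{\prod}}\ga_{\ul{\gl}^{(i+1)}}(\ul{\gl}^{(i)},p).}

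\textbf{Conclusion and the main obstacle.} Every factor on the right is a polynomial in $p$ with non-negative integer coefficients, by Theorem~\ref{theorem:SubgroupCounting} and Theorem~\ref{theorem:SubgroupEnumeration} together with Remark~\ref{remark:pBinomCoefficient}, so their product is as well. For unimodality I would use that, as recorded after Theorem~\ref{theorem:SubgroupCounting} and for Butler's theorem, each factor equals a power of $p$ times a symmetric unimodal polynomial with non-negative coefficients. The crux is then the classical fact (attributable to R.~P.~Stanley~\cite{MR1110850}) that a product of symmetric unimodal polynomials with non-negative coefficients is again symmetric and unimodal; granting this, the product of the symmetric unimodal parts is symmetric and unimodal, while the leftover power of $p$ merely shifts the coefficient sequence and preserves unimodality. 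I expect this unimodality step --- ensuring that unimodality survives the product --- to be the main obstacle, together with the bookkeeping needed to justify the duality that converts the co-type counts into the type counts to which Butler's theorem directly applies.
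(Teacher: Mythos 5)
Your argument for part (2) is essentially the paper's own proof: you fix $A_m$ (counted by Theorem~\ref{theorem:SubgroupCounting}), use the duality $K\mapsto K^{\perp}$ of Remark~\ref{remark:Duality} to convert each successive co-type count into a type count to which Butler's Theorem~\ref{theorem:SubgroupEnumeration} applies, and land on the same product formula $\ga_{r,s}(\ul{\gl}^{(m)},p)\us{i=1}{\os{m-1}{\prod}}\ga_{\ul{\gl}^{(i+1)}}(\ul{\gl}^{(i)},p)$; the unimodality step is, in both treatments, the appeal to Stanley's result that a power of $p$ times a product of symmetric unimodal polynomials with non-negative coefficients is again unimodal. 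Where you genuinely depart from the paper is part (1). The paper deduces (1) from (2) by summing $\ga_s(\ul{\gl}^{(1)},\ldots,\ul{\gl}^{(m)},p)$ over all chains of partitions $\ul{\gl}^{(1)}\subseteq\cdots\subseteq\ul{\gl}^{(m)}$ with $\mid\ul{\gl}^{(i)}\mid=a_i$ and at most $s$ parts, which yields non-negativity but no closed form. You instead use that every finite-index subgroup of $\Z^s$ is again free of rank $s$, so the chain can be built from the top with the count at each stage given by Theorem~\ref{theorem:TotalSubgroupCounting} and independent of the earlier choices, giving the explicit product
\equ{\ga_s(S,p)=\us{i=1}{\os{m}{\prod}}\binom{(a_i-a_{i-1})+s-1}{s-1}_p \qquad (a_0=0).}
This telescoping is correct, and it buys more than the paper's route: a closed product formula, the (nontrivial, and nowhere stated in the paper) identity equating this product with the paper's sum over chains of partitions, and --- since each Gaussian binomial factor is symmetric and unimodal with non-negative coefficients --- unimodality of $\ga_s(S,p)$, which the paper does not assert in part (1).
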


A combinatorial interpretation of the coefficients of these polynomials $\ga_s(S,p)$ and $\ga_s(\ul{\gl}^{(1)},\ul{\gl}^{(2)},\cdots,\ul{\gl}^{(m)},p)$ is not known and it is desirable to have one such interpretation similar to the results Proposition 1.3.2 and Proposition 1.4.6 obtained by L.~M.~Butler~\cite{MR1223236}.
So we have the open question.
\begin{ques}
With notations as in Theorem~\ref{theorem:ChainsofSubgroups}, give a combinatorial interpretation of the coefficients of the polynomials $\ga_s(S,p)$ and $\ga_s(\ul{\gl}^{(1)},\ul{\gl}^{(2)},\cdots,\ul{\gl}^{(m)},p)$.
\end{ques}
\section{\bf{The Proof of the Main Theorem and its Consequences}}
In this section we prove main Theorem~\ref{theorem:SubgroupCounting}.
First we mention a theorem on integral matrices and another theorem of strong approximation type.
\begin{theorem}
\label{theorem:Equivalence}
Let $s$ be a positive integer. Consider the actions of $SL_s(\Z)$ and $SL_s(\Z)\times SL_s(\Z)$ on the space $\mcl{S}=M_{s\times s}(\Z)\cap GL^{+}_s(\mbb{Q})$ consisting of $s\times s$ integer matrices with positive determinant as follows.
\begin{enumerate}[label=(\alph*)]
\item $SL_s(\Z) \times \mcl{S} \lra \mcl{S}$ given by $(U,X)\lra UX$
\item $(SL_s(\Z)\times SL_s(\Z)) \times \mcl{S} \lra \mcl{S}$ given by $((U,V),X)\lra UXV^{-1}$.
\end{enumerate}	
 Then we have the following.
\begin{enumerate}
\item In every orbit of the action in Case $(a)$, there is a unique lower triangular matrix consisting of non-negative integer entries such that each element below a diagonal element is strictly smaller than the corresponding diagonal element above it.
\item In every orbit of the action in Case $(b)$, there is a unique diagonal matrix consisting of diagonal positive integer entries $d_s\mid d_{s-1}\mid \cdots \mid d_1$. 
\end{enumerate}
\end{theorem}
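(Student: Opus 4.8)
The plan is to recognize the two normal forms as the Hermite normal form (Part (1)) and the Smith normal form (Part (2)) for integer matrices, adapted to the group $SL_s(\Z)$ and to the domain $\mcl{S}$ of positive-determinant matrices. The single observation that makes the passage from the classical $GL_s(\Z)$-statements to the $SL_s(\Z)$-statements painless is this: if $X\in\mcl{S}$ and $U\in GL_s(\Z)$ with $UX\in\mcl{S}$, then $\det(UX)=\det U\cdot\det X>0$ together with $\det X>0$ forces $\det U>0$, hence $\det U=1$; the same applies to the pair $(U,V)$ in Case (b). Consequently the $SL_s(\Z)$- (resp.\ $SL_s(\Z)\times SL_s(\Z)$-) orbit of $X$ inside $\mcl{S}$ is exactly the intersection of its $GL_s(\Z)$- (resp.\ two-sided $GL_s(\Z)$-) orbit with $\mcl{S}$, and any representative with positive diagonal automatically lies in $\mcl{S}$. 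So it suffices to produce the stated representatives and to prove their uniqueness.

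For existence in Part (1) I would proceed algorithmically, clearing $X$ to lower triangular shape by left multiplication and working from the last column toward the first. A $2\times 2$ block realizing B\'ezout's identity lies in $SL_2(\Z)$ and, embedded into $SL_s(\Z)$, replaces a chosen pair of entries in a column by their gcd and $0$; iterating this Euclidean step makes column $s$ have its only nonzero entry in row $s$. Restricting all subsequent operations to rows $1,\ldots,j$ when treating column $j$ preserves the zeros already created in columns $j+1,\ldots,s$ and places the gcd of the remaining column-$j$ entries in position $(j,j)$; this yields a lower triangular $H$. Its diagonal is positive: each entry obtained as a gcd is taken positive, and since the algorithm keeps us in $SL_s(\Z)$ we have $\prod_j h_{jj}=\det H=\det X>0$, forcing the remaining diagonal entry to be positive as well. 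Finally, for each $i>j$ I subtract a suitable integer multiple of row $j$ from row $i$, treating the columns from right to left so that already-reduced entries are not disturbed, to arrange $0\le h_{ij}<h_{jj}$, which is precisely the asserted form.

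Uniqueness in Part (1) is the step I would treat most carefully. Suppose $H'=UH$ with $U\in SL_s(\Z)$ and $H,H'$ both in the reduced form. Since $H$ and $H'$ are lower triangular with positive diagonal, $U=H'H^{-1}$ is lower triangular, and comparing $(i,i)$-entries gives $h'_{ii}=u_{ii}h_{ii}$; positivity of both diagonals forces $u_{ii}=1$, so $U$ is unipotent lower triangular. I then compare the strictly-below-diagonal entries column by column: the relation $H'=UH$ together with the constraints $0\le h_{ij},h'_{ij}<h_{jj}$ forces the off-diagonal entries of $U$ to vanish, giving $U=I$ and $H=H'$. This finite descent on the columns is the only place where the precise inequality defining the normal form is used.

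For Part (2), existence is the classical Smith reduction: alternating left and right multiplications (row and column operations, each a Euclidean step, with signed transpositions $\bigl(\begin{smallmatrix}0&1\\-1&0\end{smallmatrix}\bigr)\in SL_2(\Z)$ in place of bare swaps) bring $X$ to diagonal form, after which the chain $d_s\mid d_{s-1}\mid\cdots\mid d_1$ is obtained by the usual ``move the gcd into a corner'' manoeuvre, the $d_i$ being positive as greatest common divisors. For uniqueness I would invoke the fact that the gcd $\Delta_k(X)$ of all $k\times k$ minors of $X$ is invariant under $X\mapsto UXV^{-1}$ with $U,V\in GL_s(\Z)$, hence under the Case (b) action; for a diagonal matrix with $d_s\mid\cdots\mid d_1$ one computes $\Delta_k=d_1 d_2\cdots d_k$, so each product $d_1\cdots d_k$, and therefore each $d_k$, is an orbit invariant. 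Equivalently, the $d_i$ are the invariant factors of the finite abelian group $\Z^s/X\Z^s$, whose isomorphism type is orbit-invariant and whose invariant-factor decomposition is unique. The main obstacle throughout is bookkeeping rather than conceptual: organizing the Euclidean reductions so that created zeros survive, and managing signs and determinants so as to remain inside $SL_s(\Z)$ and $\mcl{S}$ while forcing positive diagonal entries.
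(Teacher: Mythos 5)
Your proposal is correct and follows essentially the same route as the paper, which simply cites the Hermite and Smith normal form theorems from M.~Newman's \emph{Integral Matrices} and instructs the reader to adapt them from $GL_s(\Z)$ to $SL_s(\Z)$ --- exactly the adaptation you carry out in detail. One small imprecision: in Case (b) positivity of determinants only forces $\det U=\det V$, not $\det U=\det V=1$; but this does not affect your argument, since your existence algorithm stays inside $SL_s(\Z)$ throughout and your uniqueness invariants (the gcds of the $k\times k$ minors) are invariant under the full $GL_s(\Z)\times GL_s(\Z)$ action.
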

\begin{proof}
For $(1)$, we observe that this is a standard result for integral matrices regarding hermite normal form of a matrix. For the proof, we may adapt the proof given in M.~Newman~\cite{MR0340283}, Theorem II.2, Page 15 and Theorem II.3, Page 18  to the action of $SL_s(\Z)$ instead of $GL_s(\Z)$.

For $(2)$, we observe that this is also a standard result for integral matrices regarding Smith normal form of a matrix. For the proof, we may adapt the proof given in M.~Newman~\cite{MR0340283}, Theorem II., Page 26, Theorem II.10, Page 28 and Theorem II.11, Page 29 to the action of $SL_s(\Z)$ instead of $GL_s(\Z)$.
\end{proof}
Now we state a theorem on strong approximation which is needed in the proof of the main theorem.
\begin{theorem}
\label{theorem:StrongApproximation}
Let $s\in \N,1<n\in \N$. The reduction map $\Z\ra \Z/n\Z$ induces a surjective map from $SL_s(\Z)$ to $SL_s(\Z/n\Z)$.
\end{theorem}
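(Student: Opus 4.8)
The plan is to prove surjectivity by induction on $s$, stripping off the first column at each stage, and to isolate the single number-theoretic fact that drives the argument. Write $\pi\colon SL_s(\Z)\to SL_s(\Z/n\Z)$ for the reduction map. The base case $s=1$ is trivial, since both groups are trivial. For the inductive step I would fix $\bar g\in SL_s(\Z/n\Z)$ and look at its first column; because $\bar g$ is invertible, that column is unimodular over $\Z/n\Z$ (its entries generate the unit ideal), which, for any integer lifts $a_1,\ldots,a_s$ of the entries, says precisely that $\gcd(a_1,\ldots,a_s,n)=1$.

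The crux is the following adjustment lemma: given integers $a_1,\ldots,a_s$ with $\gcd(a_1,\ldots,a_s,n)=1$, there exist integers $a_i'\equiv a_i\pmod n$ with $\gcd(a_1',\ldots,a_s')=1$. I would prove it by keeping $a_1,\ldots,a_{s-1}$ and replacing $a_s$ by $a_s+nt$ for a well-chosen $t$: setting $d=\gcd(a_1,\ldots,a_{s-1})$, it suffices to make $a_s+nt$ coprime to $d$. For a prime $q\mid d$ with $q\mid n$ one checks $q\nmid a_s$ automatically, since otherwise $q$ would divide $\gcd(a_1,\ldots,a_s,n)$; for a prime $q\mid d$ with $q\nmid n$ the class $a_s+nt\bmod q$ can be steered off $0$ because $n$ is invertible mod $q$, and the Chinese Remainder Theorem over the finitely many such primes produces the required $t$. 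This is exactly where the hypothesis $\gcd(a_1,\ldots,a_s,n)=1$ is consumed and is the heart of the proof.

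After this adjustment the vector $v=(a_1',\ldots,a_s')^T$ is a primitive integer vector with $\bar v$ equal to the first column of $\bar g$, so by the classical completion lemma over the PID $\Z$ (a primitive vector is the first column of a matrix of determinant $\pm1$, and for $s\geq 2$ one fixes the sign by negating a later column; see M.~Newman~\cite{MR0340283}) there is $U\in SL_s(\Z)$ with first column $v$. Then $\bar U^{-1}\bar g$ has first column equal to the standard basis vector $e_1$, hence has block form $\left(\begin{smallmatrix}1 & \bar w\\ 0 & \bar h'\end{smallmatrix}\right)$ with $\bar h'\in SL_{s-1}(\Z/n\Z)$. Choosing an integer lift $w$ of $-\bar w$ and setting $T=\left(\begin{smallmatrix}1 & w\\ 0 & I\end{smallmatrix}\right)\in SL_s(\Z)$, right multiplication by $\bar T$ clears $\bar w$ and gives the block-diagonal form $\left(\begin{smallmatrix}1 & 0\\ 0 & \bar h'\end{smallmatrix}\right)$. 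Applying the inductive hypothesis to $\bar h'$ yields $g'\in SL_{s-1}(\Z)$ reducing to it, and then $U\left(\begin{smallmatrix}1 & 0\\ 0 & g'\end{smallmatrix}\right)T^{-1}\in SL_s(\Z)$ reduces to $\bar g$, completing the induction.

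The main obstacle is the adjustment lemma: the block-matrix manipulation and the primitive-vector completion over $\Z$ are routine, but the passage from \emph{unimodular modulo $n$} to \emph{coprime over $\Z$} is where the genuine content lives and where $n>1$ is used essentially. An alternative route would be to observe that $\Z/n\Z$, being finite and hence semilocal, satisfies $SL_s=E_s$, so $SL_s(\Z/n\Z)$ is generated by elementary transvections, each of which lifts tautologically to a transvection in $SL_s(\Z)$; I would mention this only as a remark, since the induction above is self-contained and avoids invoking the structure theory of $SL$ over semilocal rings.
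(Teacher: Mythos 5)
Your route is genuinely different from the paper's. The paper proves surjectivity by showing that $SL_s(\Z/n\Z)$ is generated by elementary matrices --- via the Chinese Remainder Theorem, which reduces the question to the local rings $\Z/p^e\Z$ --- and then lifting each elementary generator tautologically; this is exactly the argument you relegate to a closing remark. Your induction on $s$ by lifting the first column is the classical ``strong approximation by hand'' proof: it is self-contained, and it isolates the one arithmetic input (a vector unimodular mod $n$ lifts to a primitive integer vector), whereas the paper outsources the work to the structure of $SL_s$ over local rings. The block-matrix reduction and the completion of a primitive vector to a matrix in $SL_s(\Z)$ are correct as you state them.

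There is, however, a gap in your proof of the adjustment lemma. You keep $a_1,\dots,a_{s-1}$, set $d=\gcd(a_1,\dots,a_{s-1})$, and choose $t$ prime-by-prime over the primes dividing $d$. This fails when $a_1=\cdots=a_{s-1}=0$, i.e.\ $d=0$: then ``coprime to $d$'' means $|a_s+nt|=1$, which is achievable only if $a_s\equiv\pm1\pmod n$, and the set of primes dividing $d$ is infinite, so the CRT step is unavailable. The case is not vacuous: for $n=5$, $s=2$, the matrix $\left(\begin{smallmatrix}0&2\\2&0\end{smallmatrix}\right)$ lies in $SL_2(\Z/5\Z)$ and its first column lifts to $(0,2)^T$, for which no integer $t$ gives $|2+5t|=1$. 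The repair is immediate --- if all of $a_1,\dots,a_{s-1}$ vanish, then the hypothesis reduces to $\gcd(a_s,n)=1$, so replacing $a_1=0$ by $n$ already produces a primitive vector congruent to the original modulo $n$; after that preliminary step one has $d\neq 0$ and your prime-by-prime argument goes through. You should add this degenerate case explicitly, since as written the step you call the heart of the proof is exactly where the argument breaks.
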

\begin{proof} 
\begin{itemize}
\item Firstly, we observe that if $R$ and $S$ are two commutative rings with unity, $\gf:R\lra S$ is a surjective ring homomorphism and $SL_s(S)$ is generated by elementary matrices then the induced map $SL_s(R)\lra SL_s(s)$ is surjective.
\item Secondly, If $S_i,1\leq i\leq m$ are finitely many commutative rings with unity such that $SL_s(S_i),1\leq i\leq m$ are generated by elementary matrices then $SL_s(\us{i=1}{\os{m}{\prod}}S_i)$ is generated by elementary matrices because $SL_s(\us{i=1}{\os{m}{\prod}}S_i) \cong \us{i=1}{\os{m}{\prod}} SL_s(S_i)$.
\item  Thirdly, if $S$ is a commutative local ring with unity then $SL_s(S)$ is generated by elementary matrices. This follows by adapting the proof of Theorem $2.2.2$ for commutative fields in Chapter $2$ on Page $63$ of J.~Rosenberg~\cite{MR1282290} to commutative local rings with unity.	
\end{itemize}
Finally, as an application of Chinese remainder theorem we obtain that the map $SL_s(\Z)\lra SL_s(\Z/n\Z)$ is surjective.  For another proof of Theorem~\ref{theorem:StrongApproximation} the reader is referred to C.~P.~Anil Kumar~\cite{MR3887364}, Theorem 1.7, Page 338.	
\end{proof}

We now prove Theorem~\ref{theorem:TotalSubgroupCounting} here.
\begin{proof}[Proof of Theorem~\ref{theorem:TotalSubgroupCounting}]
Let $A$ be a subgroup of $\Z^s$ of index $p^r$. Then $A$ is a free $\Z$-module of rank $s$. Let $M$ be an $s\times s$ integer matrix whose rows form a basis for $A$ and $\Det(M)=p^r$. Then the right multiplication of $SL_s(Z)$ by $M$ gives rise to matrices with determinant $p^r$ whose rows form bases for the same subgroup $A$. If we have two bases for $A$, then the base change matrix is in $SL_s(\Z)$, that is, it is an integer matrix of determinant one.  Hence there is a bijection between the orbits of the left action of $SL_s(\Z)$ on the subset $M^{p^r}_{s\times s}(\Z)\subseteq M_{s\times s}(\Z)\cap GL_s^{+}(\mbb{Q})$ of matrices of determinant $p^r$ and the subgroups of $\Z^s$ of index $p^r$. So it is enough to count the orbits of this action. By Theorem~\ref{theorem:Equivalence}(1) the number of such orbits is given by 
\equan{Sum}{\us{\os{b_1+b_2+\cdots+b_s=r}{b_i\geq 0,1\leq i\leq s}}{\sum}p^{b_2+2b_3+\cdots+(s-1)b_s}.}
Now the solutions $b_1+b_2+\ldots+b_s=r,b_i\geq 0,1\leq i\leq s$ bijectively correspond to tableaux of the form $\ul{\gl}=(\gl_1=b_2+\ldots+b_s\geq \gl_2=b_3+\ldots+b_s\geq \ldots \geq \gl_{s-2}=b_{s-1}+b_s\geq \gl_{s-1}=b_s)$ which fit inside a $(s-1)\times r$ rectangle as in Equation~\ref{Eq:Tableau}, where we ignore the zero parts of $\ul{\gl}$. Here we have $\mid \gl \mid =b_2+2b_3+\ldots+(s-1)b_s$. So the sum in~\ref{Eq:Sum} is precisely the $p$-binomial coefficient \equ{\binom{r+s-1}{s-1}_p} using Remark~\ref{remark:pBinomCoefficient}. This proves Theorem~\ref{theorem:TotalSubgroupCounting}. 
\end{proof}
Now we prove an useful lemma.
\begin{lemma}
\label{lemma:PartConjPart}
	Let $\ul{\gl}=(\gl_1\geq \cdots \geq \gl_s)=(\gm_1^{\gr_1}>\gm_2^{\gr_2}>\cdots>\gm_l^{\gr_l})\in \Gl^0_{r,s}$ with $\us{i=1}{\os{s}{\sum}}\gl_i=\us{i=1}{\os{l}{\sum}}\gm_i\gr_i=r,\ \us{i=1}{\os{l}{\sum}}\gr_i=s$ where $\gm_i,\ 1\leq i\leq l$ are the distinct parts of $\ul{\gl}$ occurring with multiplicities $\gr_i,\ 1\leq i\leq l$ respectively. Let $\ul{\gl}'=(s=\gl_1'\geq \gl_2'\geq \cdots\geq \gl_{\gl_1+t}')\in \Gl$ be the conjugate of $\ul{\gl}+t=(\gl_1+t\geq \gl_2+t\geq \cdots \geq \gl_s+t)\in \Gl$ for any choice of $t\in \N$ with the convention that $\gl_j'=0$ for $j>\gl_1+t$. Then 
	\equan{PartConjPart}{\us{1\leq i<j\leq l}{\sum}(\gm_i-\gm_j-1)\gr_i\gr_j=\us{j\geq 1}{\sum}(\gl_1'-\gl_j')\gl_{j+1}'.}
\end{lemma}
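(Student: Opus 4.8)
The plan is to prove the identity by rewriting its right-hand side, which is phrased through the conjugate partition, as a double sum over pairs of parts of $\ul{\gl}$, and then to read off that double sum as the left-hand side. Write $\gn=\ul{\gl}+t=(\gn_1\geq\cdots\geq\gn_s)$ with $\gn_i=\gl_i+t$, so that $\ul{\gl}'$ is by definition the ordinary conjugate of $\gn$ and $a_j:=\gl_j'=\#\{i:\gn_i\geq j\}$. Since $\gn$ has exactly $s$ positive parts we have $a_1=s$, and the right-hand side of Equation~\ref{Eq:PartConjPart} becomes $\sum_{j\geq 1}(s-a_j)a_{j+1}$.

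The crucial step is to reinterpret the two factors as cardinalities: because $\gn$ has $s$ parts, $s-a_j=\#\{i:\gn_i<j\}$, while $a_{j+1}=\#\{i:\gn_i>j\}$. Hence each summand $(s-a_j)a_{j+1}$ counts ordered pairs $(i,i')$ of part-indices with $\gn_i<j<\gn_{i'}$, so the whole sum counts triples $(i,i',j)$ with $\gn_i<j<\gn_{i'}$. Next I would interchange the order of summation and sum first over $j$: for a fixed ordered pair $(i,i')$ the number of integers $j$ strictly between $\gn_i$ and $\gn_{i'}$ is $\gn_{i'}-\gn_i-1$ when $\gn_{i'}>\gn_i$ and $0$ otherwise. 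This converts the right-hand side into $\sum_{(i,i'):\,\gn_{i'}>\gn_i}(\gn_{i'}-\gn_i-1)$, a sum over ordered pairs of parts of $\gn$ with strictly increasing value.

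The final step is to group these pairs by the distinct values of $\gn$. The distinct parts of $\gn$ are $\gm_a+t$ with multiplicity $\gr_a$, and a contributing pair amounts to choosing a value $\gm_a+t$ together with a strictly smaller value $\gm_b+t$, i.e. indices $a<b$ (the $\gm$'s being strictly decreasing); there are $\gr_a\gr_b$ such ordered pairs, each contributing $(\gm_a+t)-(\gm_b+t)-1=\gm_a-\gm_b-1$. Summing yields $\sum_{1\leq a<b\leq l}(\gm_a-\gm_b-1)\gr_a\gr_b$, which is exactly the left-hand side; observe that the shift $t$ cancels, confirming the independence of $t$ asserted in the statement.

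Once the counting reinterpretation is in place the argument is essentially routine, so I do not expect a deep obstacle; the points needing care are the off-by-one in ``integers strictly between $\gn_i$ and $\gn_{i'}$'' (which produces the decisive $-1$), and the automatic vanishing of the boundary contributions where $a_j=s$ or $a_{j+1}=0$. A more pedestrian alternative would avoid the double counting: split the $j$-range into the blocks on which $a_j$ is constant (on $\gm_{a+1}+t<j\leq\gm_a+t$ one has $a_j=\gr_1+\cdots+\gr_a$, and this block has length $\gm_a-\gm_{a+1}$, with the convention $\gm_{l+1}+t=0$) and evaluate the resulting telescoping sum directly; this is correct but more calculational, so I would present the double-counting version as the main proof.
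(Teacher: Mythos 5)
Your proof is correct, and it takes a genuinely different route from the paper's. The paper first notes that both sides are unchanged under $\ul{\gl}\mapsto\ul{\gl}+1$, reduces to the case $\gl_s\neq 0$, and then argues by induction on the number of parts: it appends a part equal to $1$ and verifies, in the two cases $\gl_s\geq 2$ and $\gl_s=1$, that the increments of the two sides agree (both equal to $r-2s$, respectively $r-2s+\gr_l$). You instead give a direct double-counting argument: writing $\gn=\ul{\gl}+t$ and $a_j=\gl_j'$, you identify $(s-a_j)a_{j+1}$ as the number of ordered pairs $(i,i')$ with $\gn_i<j<\gn_{i'}$, sum over $j$ to count triples, interchange the order of summation to get $\sum_{\gn_{i'}>\gn_i}(\gn_{i'}-\gn_i-1)$, and group by distinct part values to recover $\sum_{a<b}(\gm_a-\gm_b-1)\gr_a\gr_b$. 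The key steps all check out: since $t\geq 1$ every relevant $j$ lies in the range $j\geq 1$ of the sum; the count of integers strictly between $\gn_i$ and $\gn_{i'}$ is indeed $\gn_{i'}-\gn_i-1$ (and $\gm_a-\gm_b-1\geq 0$ for $a<b$, consistent with a cardinality); and the shift $t$ cancels. Your argument buys a transparent combinatorial explanation of the $-1$ and of the $t$-independence, and avoids the case analysis and the (implicit) verification that every partition is reachable by the paper's two building moves; the paper's induction is more computational but requires no reinterpretation of the conjugate partition.
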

We mention a remark on Lemma~\ref{lemma:PartConjPart} before proving it.
\begin{remark}
	In Lemma~\ref{lemma:PartConjPart}, the first part $\gl_1'=s$ follows as a consequence of $t>0$. The positive integer $t$ is added to $\ul{\gl}$ to make sure that $\gl_1'=s$ if $\gl_s=0$ which is necessary. If $\gl_s\neq 0$ then we can allow $t=0$ and in this case $\ul{\gl}'$ can be chosen to be the conjugate of $\ul{\gl}$ itself.
\end{remark}
Next we prove Lemma~\ref{lemma:PartConjPart}.
\begin{proof}[Proof of the Lemma]
	Since $LHS$ of Equation~\ref{Eq:PartConjPart} does not change when $\ul{\gl}$ is changed to $\ul{\gl}+1$ we will prove the lemma for all partitions $\ul{\gl}$ such that $\gl_s\neq 0$, that is, $\ul{\gl}$ is in fact in $\Gl$. 
	
	So assume $\gl_s\neq 0$ and henceforth, in this proof we fix the notation 
	$\ul{\gl}=(\gl_1\geq \cdots \geq \gl_s)=(\gm_1^{\gr_1}>\gm_2^{\gr_2}>\cdots>\gm_l^{\gr_l})\in \Gl$ and the notation for its conjugate
	$\ul{\gl}'=(s=\gl_1'\geq \gl_2'\geq \cdots\geq \gl_{\gl_1}')\in \Gl$.
	
	Since $\gl_s\neq 0$, the conjugate partition of $\ul{\gl}+t=(\gl_1+t\geq \gl_2+t \geq \cdots \geq \gl_s+t)$ is given by $(s\geq s\geq \cdots \geq s\geq \gl_2'\geq \gl_3'\geq \cdots\geq \gl_{\gl_1}')$ with $s$ occuring $t+1$ times before $\gl_2'$ for any $t\in \N\cup\{0\}$. So the $RHS$ of Equation~\ref{Eq:PartConjPart} also does not change either.
	
	Now we will prove this lemma inductively. For $s=1,\ul{\gl}=(\gl_1=1)=\ul{\gl}'\in \Gl$ the lemma holds. Suppose the lemma holds for $\ul{\gl}$ with $\gl_s\neq 0$ for some $s\in \N$ then the lemma holds for $\ul{\gl}+t\in \Gl$ for all $t\in \N$. Now we prove the lemma also holds for $\ti{\ul{\gl}}=(\gl_1\geq \gl_2\geq \cdots \geq \gl_s \geq 1)\in \Gl$ where the part $1$ is added to $\ul{\gl}$ at the end.
	There are two cases. 
	\begin{enumerate}
		\item $\gl_s\neq 1$, that is $\gl_s\geq 2$. In this case $\ti{\ul{\gl}}=(\gl_1\geq \gl_2\geq \cdots \geq \gl_s \geq 1)=(\gm_1^{\gr_1}>\gm_2^{\gr_2}>\cdots>\gm_l^{\gr_l}>\gm_{l+1}^{\gr_{l+1}})\in \Gl$ where $\gm_{l+1}=1=\gr_{l+1}$ and $\widetilde{\ul{\gl}'}=(s+1\geq \gl_2'\geq \cdots\ge \gl_{\gl_1}')$. The increment in the $LHS$ of Equation~\ref{Eq:PartConjPart} is $\us{i=1}{\os{l}{\sum}}(\gm_i-\gm_{l+1}-1)\gr_i\gr_{l+1}=r-2s$. The increment in the $RHS$ of Equation~\ref{Eq:PartConjPart} is $\us{j\geq 3}{\sum}\gl_j'=r-\gl_1'-\gl_2'=r-2s$, because $\gl_1'=\gl_2'=s$ since $\gl_s\geq 2$. So we get $LHS=RHS$ for $\ti{\ul{\gl}}$.
		\item $\gl_s=1$. In this case $\ti{\ul{\gl}}=(\gl_1\geq \gl_2\geq \cdots \geq \gl_s \geq 1)=(\gm_1^{\gr_1}>\gm_2^{\gr_2}>\cdots>\gm_l^{\gr_l+1})$ where $\gm_l=1$ and $\widetilde{\ul{\gl}'}=(s+1\geq \gl_2'\geq \cdots\geq \gl_{\gl_1}')$ with $\gl'_2=s-\gr_l\leq s$ since $\gl_s=1$. The increment in the $LHS$ of Equation~\ref{Eq:PartConjPart} is $\us{i=1}{\os{l-1}{\sum}}(\gm_i-\gm_{l}-1)\gr_i=r-2s+\gr_l$. The increment in the $RHS$ of Equation~\ref{Eq:PartConjPart} is $\us{j\geq 3}{\sum}\gl_j'=r-\gl_1'-\gl_2'=r-s-(s-\gr_l)=r-2s+\gr_l$. So again we get $LHS=RHS$ for $\ti{\ul{\gl}}$. 
	\end{enumerate}
	This proves Lemma~\ref{lemma:PartConjPart}.
\end{proof}

Here we prove main Theorem~\ref{theorem:SubgroupCounting}.
\begin{proof}[Proof of the main theorem]
Let $A$ be a subgroup of $\Z^s$ of finite index $p^r$. Let $M\in M^{p^r}_{s\times s}(\Z)\subseteq M_{s\times s}(\Z)\cap GL_s^{+}(\mbb{Q})$ with $\Det(M)=p^r$ such that the rows of $M$ form a basis for $A$. Then the quotient $\frac{\Z^s}{A}$ is isomorphic to the abelian $p$-group $\mcl{A}_{\ul{\gl}}=\us{i=1}{\os{s}{\oplus}}\frac{\Z}{p^{\gl_i}\Z}$ where $\ul{\gl}\in \Gl^0_{r,s}$, if and only if, there are matrices $U,V\in SL_s(\Z)$ such that $UMV^{-1}=\Diag(p^{\gl_1},p^{\gl_2},\cdots,p^{\gl_s})$ using Theorem~\ref{theorem:Equivalence}(2) as this is a consequence of reducing the matrix $M$ into its Smith normal form. Here we note that for a subgroup $A\subseteq \Z^s$ of rank $s$ with basis as rows of a matrix $M$ which is expressed in terms of the standard basis of $\Z^s$, the rows of the matrix $UM$ gives another basis of $A$ expressed in terms of the standard basis of $\Z^s$ and the rows of the matrix $MV^{-1}$ represents the same basis of $A$ which is expressed in terms of another basis of $\Z^s$ instead of the standard basis of $\Z^s$. So the rows of $UMV^{-1}$ indeed gives a basis of the subgroup $A$ which is expressed in terms of some basis of $\Z^s$. 

So the abelian group $A$ has a basis which are the rows of the matrix \equ{UM=\Diag(p^{\gl_1},p^{\gl_2},\cdots,p^{\gl_s})V}
which is expressed in terms of standard basis of $\Z^s$.
Another matrix $\Diag(p^{\gl_1},p^{\gl_2},$ $\cdots,p^{\gl_s})W$ with $W\in SL_s(\Z)$ gives rise to a basis of its rows for the same subgroup $A$ expressed in terms of the standard basis of $\Z^s$ if and only if there exists an $X\in SL_s(\Z)$ such that \equ{X\Diag(p^{\gl_1},p^{\gl_2},\cdots,p^{\gl_s})V=\Diag(p^{\gl_1},p^{\gl_2},\cdots,p^{\gl_s})W.}
Here $X\in SL_s(\Z)$ represents the matrix of base change from rows of $\Diag(p^{\gl_1},p^{\gl_2},$ $\cdots,p^{\gl_s})V$ to rows of $\Diag(p^{\gl_1},p^{\gl_2},\cdots,p^{\gl_s})W$ in the subgroup $A$.
So we have \equ{\Diag(p^{-\gl_1},p^{-\gl_2},\cdots,p^{-\gl_s})X\Diag(p^{\gl_1},p^{\gl_2},\cdots,p^{\gl_s})=WV^{-1}\in SL_s(\Z).}
This happens if and only if $Y=WV^{-1}\in SL_s(\Z)$ has the additional property that $p^{\gl_j-\gl_i}\mid Y_{ij},\ 1\leq j<i\leq s$ by a straight forward calculation on the divisibility conditions. 	So let \equ{G_{\ul{\gl}}=\{Y\in SL_s(\Z)\mid p^{\gl_j-\gl_i}\mid Y_{ij},\ 1\leq j<i\leq s\}.}
We immediately see that $G_{\ul{\gl}}$ is a subgroup of $SL_s(\Z)$.  This is because an element $Y_0\in SL_s(\Z)$ is in $G_{\ul{\gl}}$ if and only if there exists $X_0\in SL_s(\Z)$ such that 
\equ{Y_0=\Diag(p^{-\gl_1},p^{-\gl_2},\cdots,p^{-\gl_s})X_0\Diag(p^{\gl_1},p^{\gl_2},\cdots,p^{\gl_s}).}
 Since $WV^{-1}\in G_{\ul{\gl}}$ we have $W\in G_{\ul{\gl}}V$ or the two right cosets are equal, that is, $G_{\ul{\gl}}W=G_{\ul{\gl}}V$. Conversely if, for $W,V\in SL_s(\Z),
G_{\ul{\gl}}W=G_{\ul{\gl}}V$, then the rows of the matrix $\Diag(p^{\gl_1},p^{\gl_2},\cdots,p^{\gl_s})W$ and the rows of the matrix 
$\Diag(p^{\gl_1},p^{\gl_2},\cdots,$ $p^{\gl_s})V$ form bases for the same subgroup $A\subseteq \Z^s$ such that $\frac{\Z^s}{A}$ is an abelian $p$-group isomorphic to $\mcl{A}_{\ul{\gl}}$.
Consequently the space of right cosets of the subgroup $G_{\ul{\gl}}$ in $SL_s(\Z)$ is in bijection with the set of subgroups $A$ of $\Z^s$ such that $\frac{\Z^s}{A}$ is a finite abelian p-group isomorphic to $\mcl{A}_{\ul{\gl}}$.

Now let us enumerate the coset space $SL_s(\Z)\fs G_{\ul{\gl}}$. Consider the congruence subgroup $\Gg_{\gl_1}=\{Y\in SL_s(\Z)\mid Y_{ii}\equiv 1\mod p^{\gl_1},\ 1\leq i\leq s,\ Y_{ij}\equiv 0\mod p^{\gl_1},\ 1\leq i\neq j\leq s\}$ of level $\gl_1$. Then we observe that $\Gg_{\gl_1}\subseteq G_{\ul{\gl}}\subseteq SL_s(\Z)$. Moreover the reduction map 
$SL_s(\Z)\ra SL_s\big(\frac{\Z}{p^{\gl_1}\Z}\big)$ is a surjective map (by Theorem~\ref{theorem:StrongApproximation}) whose kernel is exactly $\Gg_{\gl_1}$.
We have an exact sequence 
\equ{1\lra \Gg_{\gl_1} \lra SL_s(\Z) \lra SL_s\big(\frac{\Z}{p^{\gl_1}\Z}\big) \lra 1.}
Let $\ol{G}_{\ul{\gl}}=\frac{G_{\ul{\gl}}}{\Gg_{\gl_1}}\subseteq SL_s\big(\frac{\Z}{p^{\gl_1}\Z}\big)$. We obtain that 
\equ{SL_s(\Z)\fs G_{\ul{\gl}} \cong SL_s\big(\frac{\Z}{p^{\gl_1}\Z}\big)\fs \ol{G}_{\ul{\gl}}.}
Hence we conclude that the coset space is a finite set. So let us enumerate the coset space $SL_s\big(\frac{\Z}{p^{\gl_1}\Z}\big)\fs \ol{G}_{\ul{\gl}}$.
For this purpose we rewrite the partition $\ul{\gl}=(\gl_1\geq \cdots \geq \gl_s)\in \Gl^0_{r,s}$ in a different way which is useful. Let 
\equ{\ul{\gl}=(\gl_1\geq \cdots \geq \gl_s)=(\gm_1^{\gr_1}>\gm_2^{\gr_2}>\cdots>\gm_l^{\gr_l})\in \Gl^0_{r,s}.}
Here $\gm_i,\ 1\leq i\leq l$ are the distinct parts of $\ul{\gl}$ with multiplicities $\gr_i,\ 1\leq i\leq l$ respectively. Also $\gm_l=0$ if $\gl_s=0$.
So we have $\us{i=1}{\os{l}{\sum}}\gm_i\gr_i=r,\ \us{i=1}{\os{l}{\sum}}\gr_i=s$.
Due to the divisibility conditions on the lower triangular entries of a matrix in $G_{\ul{\gl}}$, enumeration of the group $\ol{G}_{\ul{\gl}}$ is the same as enumerating matrices of the form
\equ{\begin{pmatrix}
A^{\gr_1\times \gr_1}_{11} & A^{\gr_1\times \gr_2}_{12} & \cdots &A^{\gr_1\times \gr_{l-1}}_{1(l-1)} & A^{\gr_1\times \gr_l}_{1l}\\ \\
p^{\gm_1-\gm_2}A^{\gr_2\times \gr_1}_{21} & A^{\gr_2\times \gr_2}_{22} & \cdots &A^{\gr_2\times \gr_{l-1}}_{2(l-1)} & A^{\gr_2\times \gr_l}_{2l}\\ \\
\vdots & \vdots & \ddots & \vdots & \vdots\\ \\
p^{\gm_1-\gm_{l-1}}A^{\gr_{l-1}\times \gr_1}_{(l-1)1} & p^{\gm_2-\gm_{l-1}}A^{\gr_{l-1}\times \gr_2}_{(l-1)2} & \cdots &A^{\gr_{l-1}\times \gr_{l-1}}_{(l-1)(l-1)} & A^{\gr_{l-1}\times \gr_l}_{(l-1)l}\\ \\
p^{\gm_1-\gm_l}A^{\gr_l\times \gr_1}_{l1} & p^{\gm_2-\gm_l}A^{\gr_l\times \gr_2}_{l2} & \cdots &p^{\gm_{l-1}-\gm_l}A^{\gr_l\times \gr_{l-1}}_{l(l-1)} & A^{\gr_l\times \gr_l}_{ll}\\ \\
\end{pmatrix}\in SL_s(\frac{\Z}{p^{\gm_1}\Z})
}
where $p^{\gm_j-\gm_i}A^{\gr_i\times \gr_j}_{ij}\in M_{\gr_i\times \gr_j}(\frac{\Z}{p^{\gm_1}\Z})$ for $1\leq j<i\leq s$ and $A^{\gr_i\times \gr_j}_{ij}\in M_{\gr_i\times \gr_j}(\frac{\Z}{p^{\gm_1}\Z})$ for $1\leq i \leq j\leq s$.
Since the matrix is block upper triangular $\mod p$ such a matrix is invertible if and only if each diagonal block $A^{\gr_i\times \gr_i}_{ii}$ is invertible. Moreover we have additional condition that the matrix has determinant $1\in \ZZ {\gm_1}$. 

So the cardinality of $\ol{G}_{\ul{\gl}}$ is given by 

\equa{\mid \ol{G}_{\ul{\gl}}\mid &=\frac{p^{\big(\us{1\leq i<j\leq l}{\sum}(\gm_1-\gm_i+\gm_j)\gr_i\gr_j+\us{1\leq j<i\leq l}{\sum}\gm_1\gr_i\gr_j\big)}\us{i=1}{\os{l}{\prod}}\mid GL_{\gr_i}(\Z/p^{\gm_1}\Z)\mid}{\mid GL_1(\ZZ {\gm_1}) \mid}\\
&=\frac{p^{\big(\us{1\leq i<j\leq l}{\sum}(2\gm_1-\gm_i+\gm_j)\gr_i\gr_j\big)}\us{i=1}{\os{l}{\prod}}p^{(\gm_1-1)\gr_i^2}\mid GL_{\gr_i}(\Z/p\Z)\mid}{\mid GL_1(\ZZ {\gm_1}) \mid}}
\equa{	&=\frac{p^{\big(\us{1\leq i<j\leq l}{\sum}(2\gm_1-\gm_i+\gm_j)\gr_i\gr_j+\us{i=1}{\os{l}{\sum}}(\gm_1-1)\gr_i^2\big)}\us{i=1}{\os{l}{\prod}}\mid GL_{\gr_i}(\Z/p\Z)\mid}{\mid GL_1(\ZZ {\gm_1}) \mid}.}
The cardinality of $SL_s(\ZZ {\gm_1})$ is given by 
\equ{\mid SL_s(\ZZ {\gm_1}) \mid =\frac{\mid GL_s(\ZZ {\gm_1})\mid}{\mid GL_1(\ZZ {\gm_1})\mid}=\frac{p^{(\gm_1-1)s^2}\mid GL_s(\Z/p\Z)\mid}{\mid GL_1(\ZZ {\gm_1})\mid}.}
Hence the cardinality of the coset space $SL_s\big(\frac{\Z}{p^{\gl_1}\Z}\big)\fs \ol{G}_{\ul{\gl}}$ is given by 

\equa{&\mid SL_s\big(\frac{\Z}{p^{\gl_1}\Z}\big)\fs \ol{G}_{\ul{\gl}} \mid=\frac{p^{(\gm_1-1)s^2}\mid GL_s(\Z/p\Z)\mid}{p^{\big(\us{1\leq i<j\leq l}{\sum}(2\gm_1-\gm_i+\gm_j)\gr_i\gr_j+\us{i=1}{\os{l}{\sum}}(\gm_1-1)\gr_i^2\big)}\us{i=1}{\os{l}{\prod}}\mid GL_{\gr_i}(\Z/p\Z)\mid}\\	
&=\frac{p^{\big(\us{1\leq i<j\leq l}{\sum}(\gm_i-\gm_j-2)\gr_i\gr_j\big)}\mid GL_s(\Z/p\Z) \mid}{\us{i=1}{\os{l}{\prod}}\mid GL_{\gr_i}(\Z/p\Z)\mid}
=\frac{p^{\big(\us{1\leq i<j\leq l}{\sum}(\gm_i-\gm_j-2)\gr_i\gr_j\big)}p^{\binom{s}{2}}\us{i=1}{\os{s}{\prod}}(p^i-1)}{\us{i=1}{\os{l}{\prod}}\big(p^{\binom{\gr_i}{2}}\us{j=1}{\os{\gr_i}{\prod}}(p^j-1)\big)}\\
&=p^{\big(\us{1\leq i<j\leq l}{\sum}(\gm_i-\gm_j-2)\gr_i\gr_j+\binom{s}{2}-\us{i=1}{\os{l}{\sum}}\binom{\gr_i}{2}\big)}\frac{\us{i=1}{\os{s}{\prod}}(p^i-1)}{\us{i=1}{\os{l}{\prod}}\us{j=1}{\os{\gr_i}{\prod}}(p^j-1)}\\
&=p^{\big(\us{1\leq i<j\leq l}{\sum}(\gm_i-\gm_j-1)\gr_i\gr_j\big)}\binom{s}{\gr_1}_p\binom{s-\gr_1}{\gr_2}_p\binom{s-\gr_1-\gr_2}{\gr_3}_p\ldots\binom{\gr_{l-1}+\gr_l}{\gr_{l-1}}_p\binom{\gr_l}{\gr_l}_p.}

The final expression has two parts. The first part is a power of $p$ and the second part is a product of $p$-binomial coefficients which is the $p'$-part, that is, those which correspond to primes different from $p$.
Now we use Lemma~\ref{lemma:PartConjPart} for the power of $p$ that occurs in the last expression. So the cardinality 
\equa{\mid \ol{G}_{\ul{\gl}} \bs SL_s\big(\frac{\Z}{p^{\gl_1}\Z}\big)\mid&=p^{\us{j\geq 1}{\sum}(\gl_1'-\gl_j')\gl_{j+1}'}\binom{\gl_1'-\gl_2'}{\gl_1'-\gl_1'}_p\binom{\gl_1'-\gl_3'}{\gl_1'-\gl_2'}_p\cdots\\
&=\us{j\geq 1}{\prod}p^{(\gl_1'-\gl_j')\gl_{j+1}'}\binom{\gl_1'-\gl_{j+1}'}{\gl_1'-\gl_j'}_p.}
Here in fact $\ul{\gl}'=(\gl_1'\geq \gl_2'\geq \cdots\geq \gl_{\gl_1+t}')$ can be taken to be the conjugate of $\ul{\gl}+t=(\gl_1+t\geq \gl_2+t \geq \cdots \geq \gl_s+t)$ for any $t\in\N$. We just need to make sure that $t\in \N$ if $\gl_s=0$ so that $\gl_1'=s$ and $t$ is allowed to be zero if $\gl_s\neq 0$.
This proves main Theorem~\ref{theorem:SubgroupCounting}.
\end{proof}
As a consequence of the main Theorem~\ref{theorem:SubgroupCounting} we prove Theorem~\ref{theorem:ChainsofSubgroups}. 
But first we mention a remark.
\begin{remark}[Duality in Finite Abelian $p$-groups]
	\label{remark:Duality}
	~\\
	Let $\mcl{A}_{\ul{\gl}}=\us{i=1}{\os{s}{\oplus}}\ZZ {\gl_i}$ be the finite abelian $p$-group corresponding to the partition $\ul{\gl}\in \Gl$. Define a map $B:\mcl{A}_{\ul{\gl}}\times \mcl{A}_{\ul{\gl}} \lra \ZZ {\gl_1}$ as follows.
	\equ{B\big((a_1,a_2,\cdots,a_s),(b_1,b_2,\cdots,b_s)\big)=\us{i=1}{\os{s}{\sum}}a_ib_ip^{\gl_1-\gl_i}\in \ZZ {\gl_1}.}
	For a subgroup $A\subseteq \mcl{A}_{\ul{\gl}}$ define $A^{\perp}=\{x\in \mcl{A}_{\ul{\gl}}\mid B(x,a)=0 \text{ for all }a\in A\}$. Then we observe the following.
	\begin{itemize}
		\item The type of finite abelian $p$-group $\Hom(A,\ZZ {\gl_1})$ is same as that of $A$.
		\item There is an exact sequence of abelian $p$-groups 
		\equ{0\lra A^{\perp}\lra \us{x}{\mcl{A}_{\ul{\gl}}} \lra \us{\gf_x:a\lra B(x,a)}{\Hom(A,\ZZ {\gl_1})} \lra 0}
		and hence the co-type of $A^{\perp}$ in $\mcl{A}_{\ul{\gl}}$ is the same as of the type of $A$. 
		\item $(A^{\perp})^{\perp}=A$. Hence the type of $A^{\perp}$ is same as the co-type of $A$ in $\mcl{A}_{\ul{\gl}}$.
	\end{itemize}	
\end{remark}
\begin{proof}[Proof of Theorem~\ref{theorem:ChainsofSubgroups}]
We prove $(2)$ first. Here we use both Theorem~\ref{theorem:SubgroupCounting} and Theorem~\ref{theorem:SubgroupEnumeration}. First observe that if $\frac{\Z^s}{A_m}$ is of type $\ul{\gl}^{(m)}$ and $\frac{\Z^s}{A_{m-1}}$ is of type $\ul{\gl}^{(m-1)}$ then $\frac{A_{m-1}}{A_m}\subseteq \frac{\Z^s}{A_m}$ is of co-type $\ul{\gl}^{(m-1)}$. By Remark~\ref{remark:Duality} on duality in finite abelian $p$-groups we conclude that for a fixed $A_{m}$\ , the number of such subgroups $A_{m-1}\supseteq A_{m}$ is given by $\ga_{\ul{\gl}^{(m)}}(\ul{\gl}^{(m-1)},p)$ a polynomial in $p$ using Theorem~\ref{theorem:SubgroupEnumeration} depending only on the types. Let $r=\mid\ul{\gl}^{(m)}\mid$. We inductively conclude with a similar reasoning that \equ{\ga_s(\ul{\gl}^{(1)},\ul{\gl}^{(2)},\cdots,\ul{\gl}^{(m)},p)=\ga_{r,s}(\ul{\gl}^{(m)},p)\us{i=1}{\os{m-1}{\prod}}\ga_{\ul{\gl}^{(i+1)}}(\ul{\gl}^{(i)},p)}
which is polynomial in $p$ with non-negative coefficients with coefficients being unimodal using R.~P.~Stanley~\cite{MR1110850}, Proposition 1 on Page 503 and Theorem 11 on Page 516.

We prove $(1)$. We have 
\equ{\ga_s(S,p)=\us{\us{\mid \ul{\gl}^{(i)} \mid = a_i,1\leq i\leq m}{\ul{\gl}^{(1)}\subseteq \ul{\gl}^{(2)}\subseteq \cdots \subseteq \ul{\gl}^{(m)}}}{\sum}\ga_s(\ul{\gl}^{(1)},\ul{\gl}^{(2)},\cdots,\ul{\gl}^{(m)},p)}
where the summation is over chains of partitions, with each partition having at most $s$ parts. Hence $\ga_s(S,p)$ is a polynomial in $p$ with non-negative coefficients. This completes the proof of Theorem~\ref{theorem:ChainsofSubgroups}.    
\end{proof}

Next we prove Theorem~\ref{theorem:Identity}.
\begin{proof}
Let $k=s-1,\ n=r+s-1$ for some $s\in \N,\ r\in \N\cup\{0\}$. Then $\binom{n}{k}_p=\binom{r+s-1}{s-1}_p$.	
Using Theorem~\ref{theorem:SubgroupCounting} and Theorem~\ref{theorem:TotalSubgroupCounting} we get that 

\equ{\binom{r+s-1}{s-1}_p=\us{\ul{\gl}\in \Gl^0_{r,s}}{\sum}\bigg(\us{j\geq 1}{\prod}p^{(\gl_1'-\gl_j')\gl_{j+1}'}\binom{\gl_1'-\gl_{j+1}'}{\gl_1'-\gl_j'}_p\bigg)}
where $\ul{\gl}'=(s=\gl_1'\geq \gl_2'\geq \cdots \gl_{\gl_1+1}')$ is the conjugate of $\ul{\gl}+1=(\gl_1+1\geq \gl_2+1\geq \cdots \geq \gl_s+1)$ by taking an uniform choice of $t=1$ in Theorem~\ref{theorem:SubgroupCounting}. Therefore $\ul{\gl}+1$ and $\ul{\gl}'$ are partitions of $r+s=n+1$.  There is a set bijection 
\equ{\Gl^0_{r,s} \os{\cong}{\lra} P^{s}_{r+s} \text{ given by } \ul{\gl} \lra \ul{\gl}' \text{ the conjugate of }\ul{\gl}+1.} 
So we have the following identity for two non-negative integers $k\leq n$, 
\equ{\binom{n}{k}_p=\us{\ul{\gl}\in P^{k+1}_{n+1}}{\sum}\bigg(\us{i\geq 1}{\prod}p^{(\gl_1-\gl_i)\gl_{i+1}}\binom{\gl_1-\gl_{i+1}}{\gl_1-\gl_i}_p\bigg).}
\end{proof}

\end{document}